  \newcommand\imCMsym[4][\mathord]{%
  \DeclareFontFamily{U} {#2}{}
  \DeclareFontShape{U}{#2}{m}{n}{
    <-6> #25
    <6-7> #26
    <7-8> #27
    <8-9> #28
    <9-10> #29
    <10-12> #210
    <12-> #212}{}
  \DeclareSymbolFont{CM#2} {U} {#2}{m}{n}
  \DeclareMathSymbol{#4}{#1}{CM#2}{#3}
}
\newcommand\alsoimCMsym[4][\mathord]{\DeclareMathSymbol{#4}{#1}{CM#2}{#3}}
\theoremstyle{plain}
\newtheorem*{theoremu}{Theorem}
\newtheorem{theorem}{Theorem}[section]
\newtheorem{proposition}[theorem]{Proposition}
\newtheorem{corollary}[theorem]{Corollary}
\newtheorem*{corollaryu}{Corollary}
\newtheorem{lemma}[theorem]{Lemma}
\theoremstyle{definition}
\newtheorem{definition}[theorem]{Definition}
\theoremstyle{remark}
\newtheorem{remark}[theorem]{Remark}
\newtheorem{example}[theorem]{Example}
\newcommand{\Q}{{\mathbb Q}}
\renewcommand{\P}{{\mathbb P}}
\newcommand{\cur}[1]{\mathcal{#1}}
\newcommand{\cal}[1]{\mathcal{#1}}
\newcommand{\isomto}{\overset{\sim}{\rightarrow}}
\newcommand{\spa}[1]{\mathrm{Spa}\left(#1\right)}
\newcommand{\spf}[1]{\mathrm{Spf}\left(#1\right)}
\newcommand{\isoc}[1]{\mathrm{Isoc}^\dagger(#1)}
\title{A note on effective descent for overconvergent isocrystals}
\author{Christopher Lazda}
       \address{ Dipartimento di Matematica ``Tullio Levi-Civita''\\ Universit\`a Degli Studi di Padova\\ Via Trieste 63 \\35121 Padova\\ Italy}
       \email{lazda@math.unipd.it}
\begin{document}

\begin{abstract} In this short note we explain the proof that proper surjective and faithfully flat maps are morphisms of effective descent for overconvergent isocrystals. We then show how to deduce the folklore theorem that for an arbitrary variety over a perfect field of characteristic $p$, the Frobenius pull-back functor is an equivalence on the overconvergent category.
\end{abstract}

\maketitle 

\tableofcontents

\section*{Introduction}

Let $k$ be a field of characteristic $p>0$. One of the major problems in arithmetic geometry over the last 50 years or so has been that of describing a `good' category of coefficients for $p$-adic cohomology of varieties over $k$, with behaviour mirroring that of the category of $\ell$-adic \'etale sheaves for $\ell\neq p$. The first attempt at doing so was the category of crystals on a variety introduced by Berthelot in his thesis, following an idea of Grothendieck. However, this category fails one of the basic requirements that one expects of such a `good' category of coefficients, namely topological invariance. This manifests itself in the fact that the Frobenius pull-back functor
\[ F^*: \mathrm{Crys}(X/W)_{\Q} \rightarrow  \mathrm{Crys}(X/W)_{\Q} \]
on isocrystals is not necessarily an equivalence of categories, even if $k$ is perfect and $X$ is smooth and proper. This problem was rectified by the introduction of the category of convergent isocrystals in \cite{Ogu84}, which when $k$ is perfect turns out to be the largest full sub-category of $\mathrm{Crys}(X/W)_{\Q}$ on which $F^*$ is an equivalence. This characterisation is deduced in part from the fact that the category of convergent isocrystals satisfies descent under proper and surjective morphisms of varieties, which in turn implies the required topological invariance. 

When $X$ is not proper, Berthelot introduced in \cite{Ber96b} a refinement of the category of convergent isocrystals on $X$, by considering `overconvergence conditions' (on both objects and morphisms) along the boundary of some compactification $X\hookrightarrow \overline{X}$. A natural question then arises of whether or not this category of overconvergent isocrystals satisfies proper descent, and this was originally proved by Shiho \cite[Proposition 7.3]{Shi07b} in rather greater generality. Specifically, Shiho works relative to a general $p$-adic formal scheme of finite type over a DVR, and with partially overconvergent cohomology of pairs. The purpose of this note is to explain the proof of the following version of descent for overconvergent isocrystals, using flat descent in adic geometry.

\begin{theoremu}[\ref{theo: main}] Let $f:X\rightarrow Z$ be a proper surjective or faithfully flat morphism of varieties over $k$. Then $f$ is a morphism of effective descent for overconvergent isocrystals.
\end{theoremu}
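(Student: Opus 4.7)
The plan is to translate the descent problem for overconvergent isocrystals into a descent problem for coherent modules with integrable connection on analytic adic spaces, and then to invoke known flat (and proper) descent results in that setting.

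Working Zariski-locally on $Z$, I would choose a compactification $Z \hookrightarrow \overline{Z}$ together with a closed immersion $\overline{Z} \hookrightarrow \cur{Q}$ into a smooth $p$-adic formal $W(k)$-scheme, giving a frame in which overconvergent isocrystals on $Z$ are described as coherent modules with integrable overconvergent connection on some strict neighborhood of $]Z[_\cur{Q}$ in $]\overline{Z}[_\cur{Q}$. Using Chow's lemma in the proper surjective case, or Raynaud--Gruson flattening in the faithfully flat case, I would build a compatible frame $(X, \overline{X}, \cur{P})$ together with a lift $\overline{f} : \cur{P} \to \cur{Q}$ of $f$, and iterate the construction to obtain frames for $X \times_Z X$ and $X \times_Z X \times_Z X$. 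A descent datum for overconvergent isocrystals along $f$ then translates, frame by frame, into a descent datum for coherent modules with connection on strict neighborhoods of the appropriate tubes.

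Next I would pass to the associated analytic adic spaces. The morphism of tubes induced by $\overline{f}$ should inherit the relevant descent-theoretic property of $f$: properness and surjectivity of the tube map follow from properness of $\overline{f}$ together with standard facts about classical points, while for faithful flatness one must argue that flatness lifts from the special fiber to an adic generic fiber of a suitable strict neighborhood. Granted this, effective descent of the underlying coherent module follows from the standard theorem that faithfully flat or proper surjective morphisms of analytic adic spaces are of effective descent for coherent sheaves, and the connection descends formally from the compatibility encoded by the descent datum on the fiber product (which parametrises the relevant derivations).

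The expected main obstacle is descending the overconvergence condition itself. The recipe above produces a coherent module with integrable connection on some strict neighborhood of $]Z[_\cur{Q}$, but one needs to exhibit it as an object overconvergent along the entire boundary $\overline{Z} \setminus Z$. I would handle this by showing that, after possibly shrinking $\overline{f}$ to a strict neighborhood on which it remains faithfully flat or proper surjective, the preimages under $\overline{f}$ of a cofinal system of strict neighborhoods of $]Z[_\cur{Q}$ form a cofinal system of strict neighborhoods of $]X[_\cur{P}$. The given overconvergent extensions on the $X$-side then descend compatibly to overconvergent extensions on the $Z$-side by a direct limit argument combined with the descent statement at each finite level. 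Independence of the chosen frame, together with Zariski gluing on $Z$, would conclude the proof.
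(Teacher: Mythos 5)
Your overall strategy (reduce descent for isocrystals to descent for coherent sheaves on strict neighbourhoods of tubes) is the right one, but two of your steps would fail as written. First, your faithfully flat case rests on the claim that flatness of $f$ ``lifts from the special fiber to an adic generic fiber of a suitable strict neighborhood''. There is no such lifting: a faithfully flat morphism of $k$-varieties need not extend to a flat morphism of formal $\cur{V}$-schemes, and even when a lift $\overline{f}$ exists there is no reason for the induced map on tubes to be flat or surjective (Raynaud--Gruson flattening operates on the special fibre and gives no control over the generic fibre). The paper avoids this entirely: by \cite[Tag 05WN]{stacks} a faithfully flat morphism of finite type acquires a section after a composite of Zariski covers and finite faithfully flat maps, so the faithfully flat case reduces formally to the finite flat, hence proper surjective, case. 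Second, in the proper surjective case you invoke ``the standard theorem that faithfully flat or proper surjective morphisms of analytic adic spaces are of effective descent for coherent sheaves''. No such theorem is available for proper surjective maps (effective descent of coherent sheaves along proper surjections fails already in the algebraic category), and the paper does not use one. Instead, after Chow's lemma one embeds $X\hookrightarrow \P^n_Z$ and takes the ambient formal scheme over a frame $(T,\overline{T},\mathfrak{T})$ to be $\widehat{\P}^n_\mathfrak{T}$, which is \emph{flat} over $\mathfrak{T}$ by construction; properness and surjectivity of $X\to T$ are used only to show, via openness of flat maps of adic spaces (Corollary \ref{cor: open}), that the image of a strict neighbourhood of $]X[$ is a strict neighbourhood of $]T[$, so that the relevant map of adic spaces is faithfully flat and only flat descent (Theorem \ref{theo: cohdes}) is needed.

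A smaller divergence: you realise an isocrystal as a module with integrable connection on a single strict neighbourhood, which forces you to descend the connection and its overconvergence separately; you flag the overconvergence of the module but not the convergence of the descended stratification on a neighbourhood of the diagonal tube, nor how the descended connection is independent of the chosen frame. The paper sidesteps all of this by using Le Stum's description of $\isoc{Z/K}$ as a compatible family of coherent $j^\dagger$-modules over \emph{all} frames mapping to $Z$: the connection and overconvergence are encoded in the functoriality, descent is performed frame by frame via Theorem \ref{theo: jdes}, and the gluing and independence-of-choices issues you defer to the end are handled automatically by the uniqueness in the descent statement. The genuinely analytic input your approach would still require --- comparing neighbourhoods of $]X\times_Z X[$ with products $V\times_{f(V)}V$ of neighbourhoods of $]X[$ --- is exactly Lemma \ref{lemma: product}.
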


Note also that the `full faithfulness' part of descent also follows from the more general results on cohomological descent proved in \cite{Tsu03b,ZB14b}. In the expected manner, one can then use this result to obtain invariance of $\isoc{X/K}$ under universal homeomorphisms, and in particular the following folklore theorem.

\begin{corollaryu}[\ref{cor: Fequiv}] Assume that $k$ is perfect, and let $X$ be a variety over $k$. Then the Frobenius pull-back functor
\[ F^*:\isoc{X/K} \rightarrow \isoc{X/K}\]
is an equivalence of categories.
\end{corollaryu}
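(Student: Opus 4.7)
The plan is to deduce the corollary from the stronger fact that pullback along any universal homeomorphism of $k$-varieties is an equivalence on overconvergent isocrystal categories. Granted that, the corollary is immediate: since $k$ is perfect, the (suitably interpreted) Frobenius $F\colon X\to X$ is a universal homeomorphism of $k$-varieties---it is finite, radicial and surjective---so $F^*$ is an equivalence.

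To establish the universal-homeomorphism invariance, I would let $f\colon Y\to Z$ be any universal homeomorphism of $k$-varieties. A universal homeomorphism of finite type is automatically finite, so $f$ is proper and surjective; Theorem~\ref{theo: main} then yields that $f$ is a morphism of effective descent for overconvergent isocrystals. Consequently $\isoc{Z/K}$ is equivalent to the category of objects of $\isoc{Y/K}$ equipped with descent data along the two projections $p_1,p_2\colon Y\times_Z Y\rightrightarrows Y$, and the task reduces to exhibiting canonical such data on every object.

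The key observation is that when $f$ is a universal homeomorphism, so too is the diagonal $\Delta_f\colon Y\to Y\times_Z Y$, being a closed immersion whose image has the same underlying space as the target. Thus $(Y\times_Z Y)_{\mathrm{red}}=Y_{\mathrm{red}}$ and both projections $p_i$ restrict to the identity on the common reduced subscheme. Invoking the topological invariance of $\isoc{\cdot/K}$ under nilpotent thickenings---the rigid-analytic tubes entering the construction do not see nilpotent structure, so $\isoc{T/K}$ depends only on $T_{\mathrm{red}}$---one obtains a canonical isomorphism $p_1^*\cong p_2^*$ on $\isoc{Y/K}$, equipping every object with canonical descent data. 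Combined with effective descent this forces $f^*\colon\isoc{Z/K}\isomto\isoc{Y/K}$ to be an equivalence, and specialisation to the Frobenius gives the corollary.

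The main obstacle I anticipate is verifying that the canonical isomorphism $p_1^*\cong p_2^*$ produced by topological invariance satisfies the cocycle condition on the triple fibre product $Y\times_Z Y\times_Z Y$; one must also handle compatibility with the unit along the diagonal. This ought to follow formally from the naturality of the nilpotent-invariance equivalence under pullback along the three face maps, but the bookkeeping has to be spelled out carefully before one can feed the data into the descent theorem. Everything else---finiteness of $f$, the fact that $\Delta_f$ is a universal homeomorphism, and the reduction of Frobenius to this case---is essentially formal.
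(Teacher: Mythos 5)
Your proposal is correct and follows essentially the same route as the paper: the paper's Theorem~\ref{theo: topinv} establishes invariance under universal homeomorphisms exactly as you describe (the diagonal of $f$ is a nilpotent immersion, tubes do not see nilpotents, so every object carries canonical descent data, and Theorem~\ref{theo: main} applies since $f$ is finite surjective). The only point where the paper is more careful than your ``suitably interpreted'' Frobenius is in handling the semilinearity: it factors $F^*$ through the relative Frobenius $F_{X/k}\colon X\to X^{(q)}$, which is an honest universal homeomorphism of $k$-varieties, composed with the semilinear base-change equivalence $\isoc{X/K}\to\isoc{X^{(q)}/K}$ coming from perfectness of $k$.
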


For smooth varieties with good compactifications, this follows from Berthelot's theorem on Frobenius descent for arithmetic $\cur{D}$-modules \cite{Ber00}. A more general version of this result (and therefore the deduction of Corollary \ref{cor: Fequiv}) forms part of current work in progress of Crew (see the introduction to \cite{Cre17} for details). The proof via Theorem \ref{theo: main}, however, is reasonably direct (i.e. does not depend on any results on arithmetic $\cur{D}$-modules), and Theorem \ref{theo: main} itself is potentially of independent interest.

The key input into the proof of Theorem \ref{theo: main} is the following version of flat descent in analytic geometry.

\begin{theoremu}[\ref{theo: cohdes}] Let $f:X\rightarrow Y$ be a faithfully flat morphism of adic spaces locally of finite type over a complete, discretely valued field. Then $f$ is a morphism of effective descent for coherent sheaves.
\end{theoremu}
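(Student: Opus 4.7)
The plan is to reduce to the case of affinoid adic spaces and then invoke the classical Grothendieck faithfully flat descent theorem for modules over rings, with care taken over completed versus ordinary tensor products.

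Since the assertion is local on $Y$, I may assume $Y = \spa{A,A^+}$ is affinoid. Covering $X$ by affinoid opens $U_i$ and using that $\bigsqcup_i U_i \to X$ is faithfully flat, a two-step descent argument lets me replace $X$ with a disjoint union of affinoids. Passing to a finite sub-cover via quasi-compactness of $Y$ and the surjectivity implied by faithful flatness, I may further assume $X = \spa{B,B^+}$ is itself affinoid, corresponding to a faithfully flat map $A \to B$ of topologically of finite type $K$-algebras. For such algebras, faithful flatness in the adic sense is equivalent to faithful flatness as a homomorphism of abstract rings, since both $A$ and $B$ are Noetherian.

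Coherent sheaves on the affinoid pieces $Y$, $X$, $X\times_Y X$, and $X \times_Y X \times_Y X$ correspond to finitely generated modules over $A$, $B$, $B\hat\otimes_A B$, and $B\hat\otimes_A B\hat\otimes_A B$ respectively. A descent datum on a coherent sheaf on $X$ is thus a finitely generated $B$-module $M$ equipped with a $B\hat\otimes_A B$-linear isomorphism $\varphi: M\hat\otimes_A B \isomto B\hat\otimes_A M$ satisfying the usual cocycle condition. The key technical point is that for finitely generated modules over these Noetherian affinoid algebras, the completed tensor product coincides with the ordinary one, so $(M,\varphi)$ is equivalently a descent datum for the ring map $A \to B$ in the classical sense. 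Grothendieck's descent theorem then produces a finitely generated $A$-module $N$ together with an isomorphism $N\otimes_A B \cong M$ compatible with $\varphi$, and the associated coherent sheaf on $Y$ is the desired descent. Full faithfulness of $f^*$ on morphisms follows from the same classical descent statement applied to $\mathrm{Hom}$ modules.

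The main obstacle I foresee is the careful bookkeeping required to move between completed and ordinary tensor products throughout, in particular checking that the cocycle condition and the final identification $f^*N \cong \cur F$ translate faithfully between the two pictures. A conceptually cleaner route, which the paper may well follow, is the Raynaud--Bosch--G\"ortz approach via formal models: spread $f$ to an admissible faithfully flat morphism of Noetherian formal schemes, apply classical faithfully flat descent for finitely presented modules level-by-level modulo powers of the uniformiser, and then take generic fibres to recover the stated result for adic spaces.
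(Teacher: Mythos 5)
There is a genuine gap, and it sits exactly at the point you dismiss as ``careful bookkeeping.'' The fibre product $X\times_Y X$ in the category of adic (or rigid) spaces over affinoids is $\spa{B\widehat{\otimes}_A B,\,\cdot\,}$, so a descent datum on a coherent sheaf relative to $f$ is a $B\widehat{\otimes}_A B$-linear isomorphism $M\otimes_B(B\widehat{\otimes}_A B)\isomto (B\widehat{\otimes}_A B)\otimes_B M$, together with a cocycle condition over $B\widehat{\otimes}_A B\widehat{\otimes}_A B$. Your claim that completed and ordinary tensor products agree is true for tensoring a finite module against the ring it lives over, but the relevant comparison is between $B\widehat{\otimes}_A B$ and $B\otimes_A B$, and these are genuinely different rings since $B$ is not finite over $A$; in particular $M\otimes_B(B\widehat{\otimes}_A B)=M\widehat{\otimes}_A B\neq M\otimes_A B$. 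So the datum you have is \emph{not} a classical descent datum for the ring map $A\to B$, and Grothendieck's theorem does not apply directly. This discrepancy is precisely why fpqc descent for coherent sheaves on rigid spaces is a nontrivial theorem: the proof of Bosch--G\"ortz (cited in the paper as \cite[Theorem 3.1]{BG98}) goes through Raynaud's formal models and flattening by admissible blow-up, roughly along the lines of your closing ``conceptually cleaner route,'' which is therefore not an alternative but the only known argument. The paper simply quotes that theorem for the qcqs case; its own content lies elsewhere.

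The second problem is your reduction to a single affinoid. Extracting a finite subcover of $Y$ from the images $f(U_i)$ requires knowing those images are open, which for flat maps of analytic spaces is itself a theorem proved in the paper (Corollary \ref{cor: open}) via formal models; and even granting that, discarding the $U_i$ outside the finite subfamily is not innocent, since $\CMcoprod_{i\in I_0}U_i\to X$ is no longer a cover of $X$ and hence not a morphism of descent, so the standard ``two-out-of-three'' lemma does not give you $X\to Y$ back (one can repair this with a universal-effective-descent refinement argument, but you do not supply it). The paper's reduction avoids the issue entirely and is where its real contribution lies: it keeps the full cover $\{V_i\}$ of $X$ by qcqs opens, sets $U_i=f(V_i)$ (open by Corollary \ref{cor: open}), and factors $\CMcoprod_i V_i\to\CMcoprod_i U_i\to Y$, so that each $V_i\to U_i$ is fpqc between qcqs spaces --- exactly the setting of Bosch--G\"ortz --- while $\CMcoprod_i U_i\to Y$ is an ordinary open cover. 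This handles the non-quasi-compact source without ever needing a finite subcover.
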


This is essentially just a rephrasing of the descent results of \cite[\S4]{Con06}; our modest contribution is the rather satisfying observation that Conrad's condition that a flat map of rigid analytic spaces (in the sense of Tate) `admits local fpqc sections' translates \emph{exactly} into the surjectivity of the associated map on adic spaces. We would therefore like to view this result as yet more evidence (if it were needed) that Huber's theory of adic spaces really is the correct setting in which to do non-archimedean analytic geometry. 

Given this analytic descent result the proof of Theorem \ref{theo: main} proceeds more or less as expected, the proper surjective case being entirely similar to the proof of the corresponding result in \cite{Shi07b}. The point is that a projective surjective map of varieties can, locally on the base, be extended to a proper flat morphism of frames. One can then show that the induced morphism on suitably small neighbourhoods of the respective tubes is faithfully flat (in the sense of adic geometry) and therefore is a morphism of effective descent for coherent sheaves. Applying this universally in frames mapping to the base $Z$ and using Le Stum's `site-theoretic' interpretation of $\isoc{X/K}$ given in \cite[\S8]{LS07} completes the proof. The faithfully flat case can then easily be deduced.

\subsection*{Acknowledgements} C. Lazda was supported by a Marie Curie fellowship of the Istituto Nazionale di Alta Matematica ``F. Severi''. He would like to thank A. Shiho, A. P\'al, B. Chiarellotto, R. Crew, B. Le Stum and N. Mazzari for useful conversations regarding the contents of this note.

\subsection*{Notations and conventions} We will let $\cur{V}$ be a complete, discrete valuation ring with fraction field $K$ of characteristic $0$ and residue field $k$ of characteristic $p>0$. We will let $\varpi$ be a uniformiser of $\cur{V}$. A variety over $k$ will be a separated scheme of finite type, and the category of these objects will be denoted $\mathbf{Var}_k$. All formal schemes over $\cur{V}$ will be assumed to be of finite type.

An analytic space over $K$ will be an adic space locally of finite type over $\spa{K,\cur{V}}$. It will be called an analytic variety over $K$ if in addition the structure morphism to $\spa{K,\cur{V}}$ is separated. Similarly, we will refer to a rigid analytic space in the sense of Tate \cite[\S9.3.1]{BGR84} as a `rigid space', and when it is separated over $\mathrm{Sp}(K)$ we will call it a rigid variety. Thus by \cite[\S1.1.11]{Hub96} there is a fully faithful functor from rigid spaces over $K$ to analytic spaces over $K$, which induces an equivalence on the full-subcategories of quasi-separated objects. We will denote the category of analytic spaces over $K$ by $\mathbf{An}_K$. We will abbreviate `quasi-compact and quasi-separated' as `qcqs'.

\section{The formalism of descent}

In this section we will very briefly recall the formalism of descent, and introduce the three examples that particularly interest us, namely coherent sheaves on analytic spaces over $K$, $j^\dagger$-modules on frames over $\cur{V}$, and overconvergent isocrystals on algebraic varieties over $k$. So suppose that we have a fibred category
\[ \cur{F} \rightarrow \cur{C}\]
over some base category $\cur{C}$. That is, for every object $X\in \cur{C}$ we have a category $\cur{F}_X$, and for every morphism $f:X\rightarrow Y$ a pull-back functor $f^*:\cur{F}_Y\rightarrow \cur{F}_X$, which are compatible under composition. Then for any morphism $f:X\rightarrow Y$ in $\cur{C}$ we have two pull-back functors
\[ \pi_0^*,\pi_1^*:\cur{F}_X \rightarrow \cur{F}_{X\times_Y X} \]
associated to the two projections $\pi_i:X\times_Y X \rightarrow X$. Similarly, we have three projections
\[ \pi_{01},\pi_{12},\pi_{02}:X\times_Y X\times_Y X \rightarrow X \times_Y X \]
giving rise to corresponding pull-back functors.

\begin{definition} If $E\in \cur{F}_X$ then descent data on $E$ relative to $f$ is an isomorphism $\alpha: \pi_0^*E\isomto \pi_1^*E$ such that 
\[ \pi_{02}^*(\alpha)=\pi_{12}^*(\alpha)\circ \pi_{01}^*(\alpha).\]
\end{definition}

The category of objects in $\cur{F}_X$ equipped with descent data is denoted $\cur{F}_{X\times_Y X\rightrightarrows X}$, pull-back by $f$ induces
\[ f^*: \cur{F}_Y\rightarrow \cur{F}_{X\times_Y X\rightrightarrows X}. \]

\begin{definition} We say that $f$ is a morphism of descent for $\cur{F}$ if the functor 
\[  f^*:\cur{F}_Y\rightarrow \cur{F}_{X\times_Y X\rightrightarrows X} \]
is fully faithful. We say that $f$ is a morphism of effective descent for $\cur{F}$ if $f^*$ is an equivalence of categories.
\end{definition}

The three key examples of fibred categories we will consider in this note are the following.

\begin{example} \begin{enumerate}
\item As in \cite[\S1]{Cre92} (but fixing the ground field $K$) we will view the category of overconvergent isocrystals $\mathbf{Isoc}^\dagger$ as a fibred category
\[  \mathbf{Isoc}^\dagger \rightarrow \mathbf{Var}_k \]
over the category of $k$-varieties. That is, for every $X\in \mathbf{Var}_k$ we have the category $\isoc{X/K}$ of overconvergent isocrystals on $X/K$, and for every morphism $f:X\rightarrow Y$ there is a pull-back functor $f^*:\isoc{Y/K}\rightarrow \isoc{X/K}$, compatibly with composition.
\item Similarly, we may view the category $\mathbf{Coh}$ of coherent sheaves as a fibred category
\[  \mathbf{Coh} \rightarrow \mathbf{An}_K \]
over the category of analytic spaces over $K$.
\item Let $\mathbf{Frame}_\cur{V}$ denote the category of frames over $\cur{V}$, that is triples $(X,\overline{X},\mathfrak{X})$ consisting of an open immersion of $k$-varieties $X\rightarrow \overline{X}$ and a closed immersion $\overline{X}\rightarrow \mathfrak{X}$ of separated formal $\cur{V}$-schemes. Then taking $(X,\overline{X},\mathfrak{X})$ to the category of coherent $j_X^\dagger\cur{O}_{]\overline{X}[_\mathfrak{X}}$-modules (in the sense of adic geometry) gives rise to a fibred category
\[ \mathbf{Coh}_{j^\dagger } \rightarrow \mathbf{Frame}_\cur{V}.\] We will explain this example in great detail in \S\ref{sec: frj} below.
\end{enumerate}
\end{example}

\section{Flat descent for analytic spaces}

The purpose of this section is to give a careful discussion of flat descent for analytic spaces, and in particular rephrasing the results of \cite{Con06} in terms of adic spaces. One particularly pleasing aspect of this reformulation is that it gives a very natural interpretation of the condition appearing in \cite[\S4]{Con06} that a flat map of rigid spaces `admits local fpqc sections' - it simply means that the induced map on adic spaces is surjective. This will then let us deduce a simple-to-state version of flat descent for analytic spaces over $K$.

\begin{definition} Let $f:X\rightarrow Y$ be a morphism of analytic spaces over $K$.
\begin{enumerate} \item We say that $f$ is \emph{flat} if for all $x\in X$ the ring homomorphism $\cur{O}_{Y,f(x)}\rightarrow \cur{O}_{X,x}$ is flat.
\item We say that $f$ is \emph{faithfully flat} if in addition $f$ is surjective.
\item We say that $f$ is \emph{fpqc} if it is faithfully flat and quasi-compact.
\end{enumerate}
\end{definition}

Note that the second condition is stronger than simply requiring surjectivity on rigid points, as the following example shows.

\begin{example} Let $X$ be the disjoint union of the open unit disc and the closed annulus of radius 1. Let $Y$ be the closed unit disc, and $f:X\rightarrow Y$ the obvious map. Then $f$ is flat and surjective on rigid points, but not faithfully flat in our sense.
\end{example}

Since we will be comparing with the situation of rigid spaces, let us recall the following definitions.

\begin{definition} Let $f_0:X_0\rightarrow Y_0$ be a morphism of rigid spaces over $K$.
\begin{enumerate}
\item We say that $f_0$ is \emph{flat} if for all $x\in X_0$ the ring homomorphism $\cur{O}_{Y_0,f_0(x)}\rightarrow \cur{O}_{X_0,x}$ is flat.
\item We say that $f_0$ is \emph{fpqc} if it is flat, quasi-compact and surjective.
\end{enumerate}
\end{definition}

Note that we have deliberately avoided giving the definition of a faithfully flat map of rigid spaces \emph{without additional quasi-compactness hypotheses}. We will first need to check various compatibilities of these notions. Note that it follows immediately from the definitions that a morphism $f_0:X_0\rightarrow Y_0$ of rigid spaces over $K$ is flat if the associated morphism $f:X\rightarrow Y$ of analytic spaces over $K$ is so, and in fact the converse is also true.

\begin{proposition} \label{cor: three} Let $f_0:X_0\rightarrow Y_0$ be a morphism of qcqs rigid spaces over $K$, with induced morphism $f:X\rightarrow Y$ of analytic spaces over $K$. Then the following are equivalent.
\begin{enumerate} \item $f_0$ is flat (resp. fpqc);
\item $f$ is flat (resp. fpqc);
\item there exists a flat (resp. fpqc) morphism $\mathfrak{X}\rightarrow \mathfrak{Y}$ of admissible formal schemes over $\cur{V}$ whose induced morphism on rigid generic fibres is $f_0$, and on adic generic fibres is $f$.
\end{enumerate}
\end{proposition}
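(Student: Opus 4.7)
The plan is to prove the cyclic implications $(3) \Rightarrow (2) \Rightarrow (1) \Rightarrow (3)$, treating the flat and fpqc versions in parallel. Throughout I use that Huber's fully faithful functor identifies $X_0$ with the set of classical (rank one, residue field finite over $K$) points of $X$, with matching local rings.

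For $(2) \Rightarrow (1)$, flatness at a rigid point is the same as flatness at the corresponding classical adic point, so the flat case is immediate. Quasi-compactness is a topological property common to $X_0$ and $X$. Surjectivity of $f_0$ on rigid points follows because the fibre $f^{-1}(y_0)$ of $f$ over a classical point $y_0 \in Y_0 \subset Y$ is a non-empty analytic space of finite type over a finite extension of $K$, which automatically contains classical points.

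For $(1) \Rightarrow (3)$, by Raynaud's theorem $X_0$ and $Y_0$ admit admissible formal models, and by the flattening theorem of Bosch--L\"utkebohmert any flat quasi-compact morphism $f_0$ is, after admissible blow-ups, the rigid generic fibre of a flat morphism $\mathfrak{X} \to \mathfrak{Y}$ of admissible formal $\cur{V}$-schemes. In the fpqc case I must further arrange that this model is surjective: the reduction $\mathfrak{X}_s \to \mathfrak{Y}_s$ is flat and locally of finite type between Jacobson Noetherian schemes over $k$, hence open, while every closed point of $\mathfrak{Y}_s$ lifts (via Hensel) to a classical point of the rigid generic fibre and so is in the image of $\mathfrak{X}_s$; since an open subset of a Jacobson scheme containing all closed points is the whole space, this gives surjectivity.

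For $(3) \Rightarrow (2)$, flatness and quasi-compactness transfer from the formal model to the adic generic fibre by standard base change on completed stalks. For adic surjectivity, given $y \in Y$ I specialize to $\bar y \in \mathfrak{Y}_s$ via Huber's specialization map, lift $\bar y$ to some $\bar x \in \mathfrak{X}_s$ using the assumed surjectivity of the formal model, and then exploit flatness of $\mathfrak{X} \to \mathfrak{Y}$ at $\bar x$ --- which gives faithful flatness of the induced map on completed stalks, and in turn surjectivity of the induced morphism between adic generic fibres of the formal neighbourhoods --- to lift $y$ to an adic point of $X$ above $\bar x$. The main obstacle is the fpqc enhancement in $(1) \Rightarrow (3)$: this is essentially the content of Conrad's reformulation of `admits local fpqc sections' in \cite[\S4]{Con06}, and the contribution of the present paper is to observe that this condition has a natural adic meaning, namely surjectivity on adic points.
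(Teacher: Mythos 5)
Your overall architecture --- the cycle $(3)\Rightarrow(2)\Rightarrow(1)\Rightarrow(3)$, flat and fpqc cases in parallel --- is exactly the paper's, and most of the individual steps are fine. Your $(2)\Rightarrow(1)$ is an acceptable elaboration of what the paper treats as immediate, and your $(1)\Rightarrow(3)$ in the fpqc case (the special fibre map is flat of finite type hence open, its image contains all closed points by surjectivity of the specialisation map onto closed points plus surjectivity of $f_0$, and a closed subset of a Jacobson scheme with no closed points is empty) is essentially the contrapositive of the paper's argument, which instead factors a non-surjective flat model through a proper open formal subscheme $\mathfrak{U}\subset\mathfrak{Y}$ and derives a contradiction with surjectivity of $f_0$ on the generic fibre. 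Both are correct and of comparable length. Note, however, that you have misplaced the difficulty: the fpqc part of $(1)\Rightarrow(3)$ is not the main obstacle, and is not where the paper (or Conrad) invests its effort.

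The genuine gap is in $(3)\Rightarrow(2)$, surjectivity of $f$ on adic points, which is the heart of the proposition and which you dispose of in one sentence. The claim that faithful flatness of $\cur{O}_{\mathfrak{Y},\bar y}\rightarrow\cur{O}_{\mathfrak{X},\bar x}$ (or of the completed stalks) implies ``surjectivity of the induced morphism between adic generic fibres of the formal neighbourhoods'' is precisely what needs proving, and it is not a citable standard fact in the form you state it: the completed local rings are not topologically of finite type over $\cur{V}$, so their ``adic generic fibres'' are not objects in the framework here, and even granting a reasonable definition one must lift an arbitrary continuous valuation of arbitrary rank along the flat local map --- lifting the support prime is easy by faithful flatness, and extending the valuation to the larger residue field is Chevalley's theorem, but one must then verify that the resulting valuation is continuous, bounded by $1$ on the plus-ring (possibly after composing with a further vertical specialisation), and actually corresponds to a point of $X$ lying over $y$ rather than merely a point of some completion. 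The paper avoids all of this by a different mechanism: it writes $X=\varprojlim_{\mathfrak{X}'}\mathfrak{X}'$ and $Y=\varprojlim_{\mathfrak{Y}'}\mathfrak{Y}'$ as inverse limits over admissible blow-ups via \cite[Theorem 2.22]{Sch12}, observes that base-changing blow-ups of $\mathfrak{Y}$ to $\mathfrak{X}$ yields a cofinal system of surjective transition data, and then applies the Fujiwara--Kato theorem on surjectivity of inverse limits of coherent sober spaces \cite[Theorem 0.2.2.13]{FK13} twice. You should either adopt that route or supply the valuation-lifting argument in full; as written, the central implication of the proposition is asserted rather than proved.
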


\begin{proof} We clearly have (2)$\Rightarrow$(1), let us show that (3)$\Rightarrow$(2). In the flat case this is straightforward - since flatness is local, and on the level of formal schemes is equivalent to locally being of the form $\spf{B^\circ}\rightarrow \spf{A^\circ}$ with $A^\circ\rightarrow B^\circ$ a flat morphism of topologically finite type $\varpi$-adic $\cur{V}$-algebras. It therefore suffices to show that if $A\rightarrow B$ is a flat morphism of affinoid $K$-algebras, then $\spa{B,B^+}\rightarrow \spa{A,A^+}$ is flat. But this simply follows from the fact that analytic localisations are flat morphisms.

In the fpqc case this is a little more involved, the key claim is that if $\mathfrak{X}\rightarrow \mathfrak{Y}$ is fpqc, then the induced map $X\rightarrow Y$ on adic generic fibres is surjective. Applying \cite[Theorem 2.22]{Sch12}, we will divide the map $X \rightarrow Y$ into two parts. First of all, let $\cur{C}$ denote the category of admissible blow-ups of $\mathfrak{Y}$, and $\cur{D}$ that of $\mathfrak{X}$. By \cite[Tag 080F]{stacks} there is therefore a canonical functor $\cur{C}\rightarrow \cur{D}$ taking $\mathfrak{Y}'\rightarrow \mathfrak{Y}$ to its \emph{base change} $\mathfrak{X}'\rightarrow \mathfrak{X}$, the induced map $\mathfrak{X}'\rightarrow \mathfrak{Y}'$ is therefore faithfully flat. We now consider the maps
\[ X =\varprojlim_{ \mathfrak{X}'\rightarrow\mathfrak{X}\in \cur{D}} \mathfrak{X}' \rightarrow \varprojlim_{\mathfrak{Y}'\rightarrow \mathfrak{Y}\in \cur{C}} \mathfrak{X}' \rightarrow \varprojlim_{\mathfrak{Y}'\rightarrow \mathfrak{Y}\in \cur{C}} \mathfrak{Y}' =Y, \]
and can conclude by applying \cite[Theorem 0.2.2.13]{FK13} twice.

Finally, we note that (1)$\Rightarrow$(3) in the flat case follows from \cite[Theorem 7.1]{Bos09}, for the fpqc case, we can argue as follows. Suppose we have a flat formal model $\mathfrak{X}\rightarrow \mathfrak{Y}$ of $X_0\rightarrow Y_0$ which is \emph{not} fpqc. Then we get a factorisation $\mathfrak{X}\rightarrow \mathfrak{U} \rightarrow \mathfrak{Y}$ where ${U} \subset \mathfrak{Y}$ is a proper open formal sub-scheme. Thus we obtain a factorisation $X_0\rightarrow U_0\rightarrow Y_0$ where $U_0\subset Y_0$ is a proper open sub-variety, contradicting surjectivity of $f_0$.
\end{proof}

\begin{corollary} \label{cor: open} Let $f:X\rightarrow Y$ be a flat morphism of analytic spaces over $K$. Then $f$ is open.
\end{corollary}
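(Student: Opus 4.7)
The question being local on both $X$ and $Y$, we may assume that both are affinoid. Since the restriction of a flat morphism to a rational open remains flat, it suffices to show that the image $f(X)$ is open in $Y$. By Proposition~\ref{cor: three}, we may further assume that $f$ is the adic generic fibre of a flat morphism $\mathfrak{X}\to\mathfrak{Y}$ of admissible formal $\cur{V}$-schemes, equivalently the map of analytic adic spaces underlying a flat morphism $f_0 : X_0 \to Y_0$ of affinoid rigid analytic spaces.

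The main input is the classical theorem that flat morphisms of rigid analytic spaces are open (see for instance \cite[Corollary 7.2]{Bos09}). In particular $f_0(X_0) \subset Y_0$ is admissible open, and hence corresponds under the equivalence between qcqs rigid spaces and qcqs analytic adic spaces to a unique quasi-compact open subset $V \subset Y$, characterised by $V \cap Y_0 = f_0(X_0)$.

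The remaining step is to verify $f(X) = V$. The inclusion $f(X) \subset V$ is obtained by describing $V$ locally as a rational subdomain and checking that its defining conditions, which are satisfied at the maximal generalisation $y_{\max} = f(x_{\max})$ of any $y = f(x) \in f(X)$, pass to $y$ under the specialisation $y_{\max} \rightsquigarrow y$. The reverse inclusion $V \subset f(X)$ is where the main work lies: given $v \in V$ with $v_{\max} = f_0(u_0) \in V_0$, one lifts the specialisation $v_{\max} \rightsquigarrow v$ to a specialisation $u_0 \rightsquigarrow u$ in $X$ using the general fact that specialisations lift along morphisms of analytic adic spaces, and then $f(u) = v$. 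Careful treatment of this specialisation lifting, and verification that the lifted point indeed lies in $X$, is the main technical obstacle in the proof.
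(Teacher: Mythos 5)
Your reduction to the affinoid case and your identification of the key input (the image $f_0(X_0)$ of a flat quasi-compact map of rigid spaces is an admissible open $V_0\subset Y_0$, corresponding to a quasi-compact open $V\subset Y$) match the paper's strategy. But the step where you pass from rigid openness to adic openness --- verifying $f(X)=V$ --- contains genuine errors and leaves the essential point unproved. First, the maximal generalisation $v_{\max}$ of an adic point $v\in V$ is a rank-one point but is almost never a classical point (the Gauss point of a disc is its own maximal generalisation), so the assertion ``$v_{\max}=f_0(u_0)\in V_0$'' does not follow from $V_0=f_0(X_0)$, which is only a statement about classical points. Second, in the inclusion $f(X)\subset V$ you argue that the defining inequalities of a rational subdomain pass from $y_{\max}$ to its specialisation $y$; this goes the wrong way, since open subsets are stable under generalisation, not specialisation (the Gauss point lies in $\{|T|=1\}$ while its rank-two specialisation does not). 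Third, the ``specialisation lifting'' you defer to is precisely the nontrivial content of the statement --- it is exactly the kind of claim that the paper's Example (open disc $\sqcup$ annulus over the closed disc) warns can fail for flat maps without quasi-compactness control --- and no proof is offered.

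The repair is available from the paper's own toolkit and avoids all pointwise arguments: once $Y_0$ is affinoid, the induced map $X_0\rightarrow V_0=f_0(X_0)$ is flat, quasi-compact and surjective on classical points between qcqs rigid spaces, i.e.\ fpqc in the rigid sense; Proposition \ref{cor: three} (the fpqc case of (1)$\Rightarrow$(2)) then says the associated adic map $X\rightarrow V$ is surjective, and since the rigid-to-adic functor takes the factorisation $X_0\rightarrow V_0\hookrightarrow Y_0$ to $X\rightarrow V\hookrightarrow Y$, one gets $f(X)=V$ open. The paper carries out the same idea one level down, factoring a flat formal model as an fpqc map $\mathfrak{X}\rightarrow\mathfrak{U}$ followed by an open immersion $\mathfrak{U}\rightarrow\mathfrak{Y}$ and invoking Proposition \ref{cor: three} for the adic generic fibre of the fpqc part. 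Either way, the whole point of that proposition is to let you conclude adic surjectivity without ever manipulating individual higher-rank points.
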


\begin{proof} The question is local on both $Y$ and $X$, we may therefore assume them both to be qcqs. It moreover suffices to show that the image $f(X)$ is open. We know that there exists a flat formal model $\mathfrak{X}\rightarrow \mathfrak{Y}$ of $f$, and arguing as in \cite[Corollary 7.2]{Bos09} we can see that this map has to factor as a fpqc map $\mathfrak{X}\rightarrow \mathfrak{U}$ followed by an open immersion $\mathfrak{U}\rightarrow \mathfrak{Y}$. We can now apply Proposition \ref{cor: three} above.
\end{proof}

This then begs the question of what the `rigid' analogue of faithful flatness is, and rather pleasingly this turns out to be exactly the descent condition appearing in \cite[Theorem 4.2.8]{Con06}.

\begin{definition} We say that a flat map $f_0:X_0\rightarrow Y_0$ of rigid spaces `admits local fpqc sections' if there exists an admissible cover $Y_0=\bigcup_i Y_{0,i}$ of $Y_0$, fpqc maps $Z_{0,i}\rightarrow Y_0$, and for each $i$ factorisations $Z_{0,i} \rightarrow X_0\rightarrow Y_0$ of $Z_{0,i}\rightarrow Y_0$.
\end{definition}

The proof of the following is similar to that of \cite[Theorem 4.2.8]{Con06}.

\begin{theorem} Let $f_0:X_0\rightarrow Y_0$ be a flat morphism of rigid spaces over $K$, with $f:X\rightarrow Y$ the induced morphism of analytic spaces over $K$. Then $f_0$ admits local fpqc sections if and only if $f$ is faithfully flat.
\end{theorem}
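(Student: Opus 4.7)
The plan is to prove the equivalence by a direct argument in both directions, leveraging Proposition \ref{cor: three} (which compares rigid and adic notions of flat/fpqc for qcqs morphisms) and Corollary \ref{cor: open} (openness of flat morphisms of analytic spaces). Throughout I would use Huber's result that, on quasi-compact quasi-separated objects, the functor from rigid spaces to analytic spaces induces a bijection between quasi-compact admissible opens and quasi-compact opens, and between admissible covers (by qc opens) and open covers.

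For the direction $(\Rightarrow)$, I would start with the given admissible cover $Y_0=\bigcup_i Y_{0,i}$ together with the fpqc morphisms $g_{0,i}:Z_{0,i}\rightarrow Y_{0,i}$ admitting factorisations $Z_{0,i} \rightarrow X_0 \rightarrow Y_0$. Passing to analytic spaces yields an open cover $Y=\bigcup_i Y_i$, and by Proposition \ref{cor: three} each induced map $g_i:Z_i\rightarrow Y_i$ is fpqc (in particular surjective) as a morphism of analytic spaces. Since $g_i$ factors through $f:X\rightarrow Y$, the image $f(X)$ contains every $Y_i$, so $f$ is surjective; flatness of $f$ is automatic from flatness of $f_0$. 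Hence $f$ is faithfully flat.

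For the direction $(\Leftarrow)$, fix a point $y_0\in Y_0$ and choose an affinoid (hence qcqs) admissible open $Y_{0,i}\subset Y_0$ containing $y_0$; let $Y_i$ denote the corresponding qcqs open of $Y$. Set $X_i:=f^{-1}(Y_i)$, which is an open of $X$ and which surjects onto $Y_i$ by faithful flatness. By Corollary \ref{cor: open}, the morphism $X_i\rightarrow Y_i$ is open, so the images of qcqs opens of $X_i$ cover $Y_i$. Because $Y_i$ is quasi-compact, finitely many affinoid opens $X_{i,1},\ldots,X_{i,n}$ of $X_i$ suffice so that $\bigcup_{j=1}^n f(X_{i,j})=Y_i$. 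Their disjoint union $Z_i:=\bigsqcup_j X_{i,j}$ is qcqs, and $Z_i\rightarrow Y_i$ is quasi-compact, surjective and flat, hence fpqc. Since source and target are qcqs, $Z_i$ descends to a qcqs rigid space $Z_{0,i}$ with an fpqc morphism $Z_{0,i}\rightarrow Y_{0,i}$ (by Proposition \ref{cor: three}), and the factorisation through $X$ automatically descends to a factorisation through $X_0$. Covering $Y_0$ by such $Y_{0,i}$'s as $y_0$ varies yields the required admissible cover.

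The main obstacle will be bookkeeping between the rigid and adic worlds: one must verify that the open cover of $Y$ by the $Y_i$'s in the first direction is admissible as a cover of $Y_0$, and conversely in the second direction that each finite disjoint union of affinoid opens of $X_i\subset X$ corresponds to a qcqs rigid space mapping to $Y_{0,i}$ with a rigid factorisation through $X_0$. Both reduce to the standard fact that, on quasi-separated objects, the rigid-to-adic functor is fully faithful with essential image the qs analytic spaces, and sends admissible opens/covers to opens/covers (bijectively on qc objects). Modulo these translations everything else is formal from Proposition \ref{cor: three} and Corollary \ref{cor: open}.
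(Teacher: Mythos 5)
Your argument is correct and follows essentially the same route as the paper: both directions reduce to Proposition \ref{cor: three} for transferring the fpqc property between the rigid and adic settings, and the converse direction uses the openness of flat maps (Corollary \ref{cor: open}) together with the rigid--adic dictionary on qcqs objects to manufacture the admissible cover and the fpqc sections. The only cosmetic difference is that you cover $Y_0$ by affinoids and take finite disjoint unions of affinoids in $X$ surjecting onto each piece, whereas the paper takes the cover to consist of the images $f_0(V_0)$ themselves; to guarantee admissibility of your cover you should simply start from an admissible affinoid cover of $Y_0$ rather than choosing a neighbourhood of each point.
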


\begin{proof} First suppose that $f_0$ admits fpqc local sections, we must show that $f$ is surjective. This is clearly local for an admissible covering of $Y_0$, hence we may assume that $Y_0$ is affinoid, and that there exists an fpqc map $Z_0\rightarrow Y_0$ and a commutative diagram
\[ \xymatrix{ & X_0 \ar[d] \\ Z_0 \ar[r]\ar[ur] & Y_0.  }  \]
Hence by simple functoriality, it suffices to show that if $Z_0\rightarrow Y_0$ is fpqc, then the induced map $Z\rightarrow Y$ is surjective. This follows from Corollary \ref{cor: three} above.

Conversely, let us suppose that $f$ is faithfully flat, we wish to show that $f_0$ admits local fpqc sections. This question is clearly local for an admissible cover of $Y_0$, which we may therefore assume to be affinoid, and in particular qcqs. Thus for any qcqs open $V\subset X$ we know that $f(V)$ is a qcqs open in $Y$. Hence the fpqc map $V\rightarrow f(V)$ between qcqs analytic spaces over $K$ has to come from an fpqc map $V_0\rightarrow f_0(V_0)$ of Tate spaces over $K$. As $V$ ranges over an open cover of $X$ by qcqs opens, the images $f(V)$ form an open cover of $Y$. Hence the images $f_0(V_0)$ form an admissible open cover of $Y_0$, and $f_0$ admits fpqc local sections.
\end{proof}

By following the proof of this theorem, that is arguing as in the proof of \cite[Theorem 4.2.8]{Con06}, we can deduce the required descent result from \cite[Theorem 3.1]{BG98}.

\begin{theorem} \label{theo: cohdes} Let $f:X\rightarrow Y$ be a faithfully flat morphism of analytic spaces over $K$. Then $f$ is a morphism of effective descent for coherent sheaves.
\end{theorem}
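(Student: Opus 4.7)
The plan is to reduce to the qcqs situation and then translate the question into one about rigid spaces, where we can invoke the descent result of \cite[Theorem 4.2.8]{Con06} (whose proof ultimately rests on \cite[Theorem 3.1]{BG98}).

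First I would argue that being a morphism of effective descent is local on the target $Y$, so we may cover $Y$ by qcqs opens and assume $Y$ itself is qcqs. Since $X$ is locally qcqs, we cover it by qcqs opens; using that $f$ is open (Corollary \ref{cor: open}) and that $Y$ is quasi-compact, we may pick finitely many $U_1,\dots,U_n$ whose images still cover $Y$. The induced map $\coprod_j U_j \to Y$ is then faithfully flat between qcqs spaces and factors $f$ through a Zariski cover $\coprod_j U_j \to X$; since a Zariski cover is itself a morphism of effective descent for coherent sheaves, and since effective descent composes in the appropriate sense, it is enough to prove the theorem after this replacement. Thus we may assume both $X$ and $Y$ are qcqs, in which case all relevant fibre products $X\times_Y X$ and $X\times_Y X\times_Y X$ are qcqs as well.

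Now in this qcqs setting, Proposition \ref{cor: three} furnishes a corresponding fpqc morphism $f_0: X_0 \to Y_0$ of qcqs rigid spaces over $K$ whose analytification recovers $f$. By \cite[\S1.1.11]{Hub96} the analytification functor is an equivalence between quasi-separated rigid spaces and quasi-separated analytic spaces, and on affinoids both sides are governed by the same Tate algebra; consequently the category of coherent sheaves on $X$ is canonically equivalent to that on $X_0$, compatibly with the formation of fibre products. The descent question for $f$ in $\mathbf{Coh}$ therefore translates into the same question for $f_0$ on the rigid side.

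Finally, by the preceding theorem $f_0$ admits local fpqc sections, so we are in the exact hypothesis of \cite[Theorem 4.2.8]{Con06}, whose conclusion is that $f_0$ is a morphism of effective descent for coherent sheaves on rigid spaces. The proof of that result reduces (using the fpqc sections) to the affinoid situation and then appeals to \cite[Theorem 3.1]{BG98} for faithful flatness descent of coherent modules. Translating back across the analytification equivalence yields effective descent for $f$. The main point requiring care, as opposed to any genuinely hard step, is to keep track of the correspondence of coherent sheaves and of descent data across the three categories in play (analytic, rigid, and admissible formal) and verify compatibility with fibre products; with the qcqs reduction in hand this amounts to assembling the fully faithful embeddings already recalled.
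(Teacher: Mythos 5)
Your proposal is correct and follows essentially the same route as the paper: reduce to a qcqs (affinoid) base, use openness of flat maps (Corollary \ref{cor: open}) to produce an open cover of $Y$ by images of qcqs opens of $X$, dévisser via effective descent for open covers to the fpqc qcqs case, transfer to rigid spaces via Proposition \ref{cor: three} and the qcqs equivalence, and invoke \cite[Theorem 3.1]{BG98} (the paper cites it directly rather than through \cite[Theorem 4.2.8]{Con06}, but that is only a cosmetic difference).
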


\begin{proof} We may assume that $Y$ is affinoid. Let $\{V_i\}$ be an open cover of $X$ by qcqs opens, and let $U_i=f(V_i)$. Then we have a commutative diagram
\[\xymatrix{\CMcoprod_i V_i \ar[r]\ar[d] & X \ar[d] \\ \CMcoprod_iU_i  \ar[r] & Y } \]
and since we know effective descent for open covers, it suffices to show that each $f_i:V_i\rightarrow U_i$ is of effective descent for coherent sheaves. But now $f_i$ is an fpqc morphism between qcqs analytic spaces, in particular it comes from an fpqc morphism $V_{0,i}\rightarrow U_{0,i}$ of rigid spaces. Hence we may apply \cite[Theorem 3.1]{BG98} to conclude. 
\end{proof}

\begin{remark} Of course, \cite[Theorem 3.1]{BG98} applies to coherent module on rigid spaces, not analytic spaces. However, if $X_0$ is a qcqs rigid space, with associated analytic space $X$, then there is a canonical equivalence of categories $\mathrm{Coh}(\cal{O}_{X_0}) \cong \mathrm{Coh}(\cur{O}_X)$, functorial in $X$.  
\end{remark}

\section{Tubes and isocrystals in adic geometry}\label{sec: frj}

In this section, we will give a quick review of Berthelot's theorem of tubes and overconvergence, from the point of view of adic geometry.

\begin{definition} A frame $(X,Y,\mathfrak{P})$ over $\cur{V}$ is a triple consisting of an open immersion $X\hookrightarrow Y$ of $k$-varieties and a closed immersion $Y\hookrightarrow \mathfrak{P}$ of formal $\cur{V}$-schemes.
\end{definition}

Let $\mathfrak{P}_K$ denote the adic generic fibre of $\mathfrak{P}$. Then we have a \emph{continuous} specialisation map
\[ \mathrm{sp}:\mathfrak{P}_K\rightarrow \mathfrak{P}_k \]
and we define $]Y[_\mathfrak{P}:=\mathrm{sp}^{-1}(Y)^{\circ}$ to be the topological interior of the inverse image of $Y$ under $\mathrm{sp}$. Thus we have a continuous specialisation map
\[ \mathrm{sp}_Y:]Y[_\mathfrak{P} \rightarrow Y \]
and we define $]X[_\mathfrak{P}:=\overline{\mathrm{sp}_Y^{-1}(X)}$ to be the topological closure of the inverse image of $X$ under $\mathrm{sp}_Y$. This only depends on the embedding $X\hookrightarrow \mathfrak{P}$ and not on $Y$. Thus we have a closed immersion
\[ j:]X[_\mathfrak{P} \rightarrow ]Y[_\mathfrak{P} \]
and we define, for any sheaf $\cal{F}$ on $]Y[_\mathfrak{P}$, the sheaf $j_X^\dagger\cal{F}:=j_*j^{-1}\cal{F}$. In particular, we may consider the category $\mathrm{Coh}(j_X^\dagger\cur{O}_{]Y[_\mathfrak{P}})$ of coherent $j_X^\dagger\cur{O}_{]Y[_\mathfrak{P}}$-modules on $]Y[_\mathfrak{P}$, and this category is canonically equivalent to its rigid analogue as considered in \cite[\S5.4]{LS07}.

If $f:(W,Z,\mathfrak{Q})\rightarrow (X,Y,\mathfrak{P})$ is a morphism of frames there is an obvious pull-back functor
\[ f^\dagger: \mathrm{Coh}(j_X^\dagger\cur{O}_{]Y[_\mathfrak{P}})\rightarrow \mathrm{Coh}(j_W^\dagger\cur{O}_{]Z[_\mathfrak{Q}}) \]
which again can be identified with its rigid analogue. In particular, we may apply \cite[\S8]{LS07} to obtain the following description of the category $\isoc{X/K}$ of overconvergent isocrystals. 

\begin{theorem} The category $\isoc{X/K}$ is canonically equivalent to the following category.
\begin{itemize} \item Objects:
\begin{itemize} \item for every frame $(T,\overline{T},\mathfrak{T})$ equipped with a map $T\rightarrow X$, a coherent $j_T^\dagger\cur{O}_{]\overline{T}[_\mathfrak{T}}$-module $E_T$;
\item for every map $g:(T',\overline{T}',\mathfrak{T}')\rightarrow (T,\overline{T},\mathfrak{T})$ over $X$ an isomorphism $u_g:g^\dagger E_T\rightarrow E_{T'}$;
\item such that $u_{g'}\circ g'^\dagger(u_g)=u_{g\circ g'}$ for composable morphisms of frames $g,g'$ over $X$.
\end{itemize}
\item Morphisms:
\begin{itemize}
\item for every frame $(T,\overline{T},\mathfrak{T})$ equipped with a map $T\rightarrow X$, a morphism $\psi_T:E_{1,T}\rightarrow E_{2,T}$;
\item such that $u_g^*(\psi_T)=\psi_{T'}$ for every morphism of frames $g:(T',\overline{T}',\mathfrak{T}')\rightarrow (T,\overline{T},\mathfrak{T})$ over $X$.
\end{itemize}
\end{itemize}
\end{theorem}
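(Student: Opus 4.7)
The plan is to reduce the statement to Le Stum's site-theoretic description of $\isoc{X/K}$ established in \cite[\S8]{LS07}, which asserts exactly the same equivalence but with tubes and $j^\dagger$-modules interpreted in Berthelot's rigid-analytic framework rather than the adic-geometric one used here. The task therefore comes down to producing a canonical equivalence between the adic version of the displayed category and its rigid-analytic counterpart, natural in the frame.

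First, for a single frame $(T,\overline{T},\mathfrak{T})$, I would verify that the adic category $\mathrm{Coh}(j_T^\dagger\cur{O}_{]\overline{T}[_\mathfrak{T}})$ is canonically equivalent to its rigid counterpart. The tube $]\overline{T}[_\mathfrak{T}$ is a quasi-compact open subset of the qcqs adic generic fibre $\mathfrak{T}_K$, hence is itself qcqs, so via the equivalence \cite[\S1.1.11]{Hub96} between qcqs rigid and qcqs analytic spaces it corresponds to Berthelot's rigid tube; this matching uses the fact that admissible opens of the rigid tube are exactly the qcqs opens of the adic tube, together with compatibility of the two specialisation maps. Since $j_T^\dagger = j_*j^{-1}$ is defined by the same formula in both settings, and both $j_*$ and $j^{-1}$ are preserved under the comparison, the two categories of coherent $j_T^\dagger\cur{O}_{]\overline{T}[_\mathfrak{T}}$-modules agree.

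Second, I would check that this equivalence is functorial in morphisms $g:(T',\overline{T}',\mathfrak{T}') \to (T,\overline{T},\mathfrak{T})$ of frames over $X$. The pull-back $g^\dagger$ is obtained from analytic pull-back along $g_K$ followed by application of $j^\dagger$, and both operations are compatible with the adic-to-rigid comparison. Consequently, the descent-type data $((E_T),(u_g))$ in the adic world correspond bijectively to the analogous data in the rigid world, and likewise for morphisms. Applying Le Stum's theorem on the rigid side then yields the claimed equivalence with $\isoc{X/K}$.

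The main point requiring care is the compatibility of the two tube constructions: one must check that the topological interior $\mathrm{sp}^{-1}(Y)^{\circ}$ and the topological closure $\overline{\mathrm{sp}_Y^{-1}(X)}$ appearing in the adic definition correspond under the comparison to Berthelot's rigid tubes $]Y[_\mathfrak{P}$ and $]X[_\mathfrak{P}$. This reduces to the well-documented compatibility of the specialisation maps in the two settings and is essentially a bookkeeping exercise rather than a genuine obstacle; once it is granted, the rest of the argument is formal.
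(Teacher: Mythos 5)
Your proposal follows the same route as the paper, which treats the comparison as essentially immediate: identify the adic category $\mathrm{Coh}(j_T^\dagger\cur{O}_{]\overline{T}[_\mathfrak{T}})$ and the pull-back functors $g^\dagger$ with their rigid analogues from \cite[\S5.4]{LS07}, and then invoke Le Stum's site-theoretic description of $\isoc{X/K}$ from \cite[\S8]{LS07}. One small correction: the tube $]\overline{T}[_\mathfrak{T}$ is in general only quasi-separated, not quasi-compact (e.g.\ the open unit disc inside the closed unit disc), but this is harmless since the comparison of \cite[\S1.1.11]{Hub96} is an equivalence on quasi-separated objects, which is all your argument actually needs.
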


\section{Descent for coherent \texorpdfstring{$j^\dagger$}{j}-modules}

The strategy of proof of Theorem \ref{theo: main} below will be to follow that of Ogus \cite[Theorem 4.6]{Ogu84} in the convergent case, and just as the key component of the proof there is a version of flat descent for coherent sheaves on rigid spaces, so we will need a version of flat descent for coherent $j^\dagger$-modules on frames.

\begin{theorem} \label{theo: jdes} Let $f:(X,\overline{X},\mathfrak{X})\rightarrow (T,\overline{T},\mathfrak{T})$ be a morphism of frames such that:
\begin{enumerate}
\item $X\rightarrow T$ is proper surjective;
\item $\overline{X}\rightarrow \overline{T}$ is proper;
\item $\mathfrak{X} \rightarrow \mathfrak{T}$ is flat. 
\end{enumerate}
Then $f$ is a morphism of effective descent for coherent $j^\dagger$-modules (taken in the sense of adic geometry).
\end{theorem}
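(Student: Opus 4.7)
The plan is to reduce Theorem \ref{theo: jdes} to Theorem \ref{theo: cohdes} by exploiting the description of coherent $j^\dagger$-modules as coherent sheaves on strict neighborhoods of the tubes, and arranging for $f$ to induce a faithfully flat map of analytic spaces between such neighborhoods.

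First, properness of $\overline{X}\rightarrow \overline{T}$ ensures that the morphism $f_K$ on adic generic fibers restricts to a map $]\overline{X}[_\mathfrak{X}\rightarrow ]\overline{T}[_\mathfrak{T}$, while flatness of $\mathfrak{X}\rightarrow \mathfrak{T}$ ensures that this restriction is flat in the sense of adic geometry. The crucial geometric step is then to produce a cofinal system of pairs $(V', V)$ consisting of a strict neighborhood $V'$ of $]X[_\mathfrak{X}$ in $]\overline{X}[_\mathfrak{X}$ and a strict neighborhood $V$ of $]T[_\mathfrak{T}$ in $]\overline{T}[_\mathfrak{T}$, with $f_K(V')\subset V$, such that the induced map $V'\rightarrow V$ is faithfully flat. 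Flatness being automatic, the content is surjectivity: one uses the proper surjectivity of $X\rightarrow T$, the compatibility of specialization with $f$, and the openness of flat maps (Corollary \ref{cor: open}) to show that for sufficiently small $V'$ the image $f_K(V')$ contains a strict neighborhood of $]T[_\mathfrak{T}$, which is then taken as $V$. The same construction has to be carried out compatibly for the frame fiber products built from $\mathfrak{X}\times_\mathfrak{T}\mathfrak{X}$ and $\mathfrak{X}\times_\mathfrak{T}\mathfrak{X}\times_\mathfrak{T}\mathfrak{X}$, which remain flat over $\mathfrak{T}$ and on which the two and three-fold projections inherit proper surjectivity on the underlying $k$-schemes.

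Given such a cofinal system, a coherent $j_X^\dagger\cur{O}_{]\overline{X}[_\mathfrak{X}}$-module equipped with descent data for $f$ is represented, on a small enough $(V', V)$, by a coherent sheaf on $V'$ with descent data relative to the faithfully flat map $V'\rightarrow V$; Theorem \ref{theo: cohdes} then produces a coherent sheaf on $V$, and the associated $j_T^\dagger$-module is the required descent. The same comparison yields full faithfulness. I expect the main technical obstacle to lie in setting up the compatible cofinal system of pairs $(V', V)$ with good behaviour under the two projections from $(\mathfrak{X}\times_\mathfrak{T}\mathfrak{X})_K$ to $\mathfrak{X}_K$: this is where properness of $\overline{X}\rightarrow \overline{T}$ and flatness of $\mathfrak{X}\rightarrow \mathfrak{T}$ must cooperate, essentially through the admissible blow-up formalism recalled in Corollary \ref{cor: three}, to guarantee that the faithfully flat structure survives passage to the fiber product and is compatible with the cocycle condition.
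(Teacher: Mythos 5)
Your proposal follows essentially the same route as the paper: one shows (using Corollary \ref{cor: open} and the properness/surjectivity hypotheses) that the images $f(V)$ of a cofinal system of strict neighbourhoods $V$ of $]X[_\mathfrak{X}$ form a cofinal system of neighbourhoods of $]T[_\mathfrak{T}$, applies Theorem \ref{theo: cohdes} to the faithfully flat maps $V\rightarrow f(V)$, and passes to the 2-colimit via the identification of coherent $j^\dagger$-modules with this colimit. The compatibility with fibre products that you flag as the main obstacle is exactly what the paper's Lemma \ref{lemma: product} supplies (every neighbourhood $W$ of $]X\times_T X[$ contains some $V\times_{\mathfrak{T}_K}V$), so your plan is correct and matches the paper's argument.
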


Unsurprisingly, the idea will be to reduce to flat descent for rigid analytic varieties, and the key lemma that will enable us to do so is the following.

\begin{lemma} \label{lemma: product} Let $f:(X,\overline{X},\mathfrak{X})\rightarrow (T,\overline{T},\mathfrak{T})$ be a morphism of frames. Then for every neighbourhood
\[ ]X\times_T X[_{\mathfrak{X}\times_\mathfrak{T}\mathfrak{X}} \subset W \subset ]\overline{X}\times_{\overline{T}}\overline{X}[_{\mathfrak{X}\times_\mathfrak{T}\mathfrak{X}} \]
of $]X\times_T X[_{\mathfrak{X}\times_\mathfrak{T}\mathfrak{X}}$ in $]\overline{X}\times_{\overline{T}}\overline{X}[_{\mathfrak{X}\times_\mathfrak{T}\mathfrak{X}}$ there exists a neighbourhood
\[ ]X[_\mathfrak{X} \subset V \subset ]\overline{X}[_{\mathfrak{X}}\]
of $]X[_\mathfrak{X}$ in $]\overline{X}[_{\mathfrak{X}}$ such that $V\times_{\mathfrak{T}_K} V \subset W$. 
\end{lemma}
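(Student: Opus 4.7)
The plan is to reduce the question to an explicit local computation with standard strict neighbourhoods. First, I would observe that the statement is Zariski-local on $\mathfrak{T}$ and $\mathfrak{X}$, so we may assume $\mathfrak{T}=\spf{B}$ and $\mathfrak{X}=\spf{A}$ are affine, and moreover that the closed complement $\overline{X}\setminus X$ is the vanishing locus in $\mathfrak{X}_k$ of finitely many functions, with lifts $\tilde g_1,\ldots,\tilde g_r \in A$. These $\tilde g_j$ automatically satisfy $|\tilde g_j(x)|\leq 1$ everywhere on the affinoid $\mathfrak{X}_K = \spa{A[1/\varpi],A}$.

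For $\lambda,\mu \in |K^\times|$ with $\lambda,\mu<1$, introduce the standard rational open neighbourhoods
\[ V_\lambda := \{\,x \in\, ]\overline{X}[_\mathfrak{X} \,:\, \max_j|\tilde g_j(x)|\geq \lambda\,\}, \]
\[ W_\mu := \{\,(x_1,x_2) \in\, ]\overline{X}\times_{\overline{T}}\overline{X}[_{\mathfrak{X}\times_\mathfrak{T}\mathfrak{X}} \,:\, \max_{j,k}|\tilde g_j(x_1)\tilde g_k(x_2)|\geq \mu\,\} \]
of $]X[_\mathfrak{X}$ and $]X\times_T X[_{\mathfrak{X}\times_\mathfrak{T}\mathfrak{X}}$ respectively. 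The key elementary observation is then the inclusion $V_\lambda \times_{\mathfrak{T}_K} V_\lambda \subset W_{\lambda^2}$: given a point $(x_1,x_2)$ in the fibre product, one picks $j$ with $|\tilde g_j(x_1)|\geq\lambda$ and $k$ with $|\tilde g_k(x_2)|\geq\lambda$, and multiplicativity of the valuation at $(x_1,x_2)$ yields $|\tilde g_j(x_1)\tilde g_k(x_2)|\geq\lambda^2$.

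Granted this, the lemma reduces to showing that any open $W$ containing $]X\times_T X[_{\mathfrak{X}\times_\mathfrak{T}\mathfrak{X}}$ contains some $W_\mu$, for then $V=V_\lambda$ with $\lambda^2\geq\mu$ will do. I expect this statement --- that the $W_\mu$ form a fundamental system of open neighbourhoods of the tube --- to be the main obstacle, and indeed it is the only place where the adic framework (as opposed to the classical rigid one) requires genuine attention. In rigid geometry it is a standard result going back to Berthelot, and in the qcqs affine situation to which we have reduced, the correspondence between quasi-compact opens in the rigid and adic generic fibres of $\mathfrak{X}\times_\mathfrak{T}\mathfrak{X}$ (in the spirit of the remark following Theorem \ref{theo: cohdes}) allows one to import the rigid version directly into the adic setting, completing the proof.
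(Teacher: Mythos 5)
Your multiplicativity trick ($V_\lambda\times_{\mathfrak{T}_K}V_\lambda\subset W_{\lambda^2}$) is exactly the heart of the paper's argument, but the reduction you build around it has a genuine gap: the sets $W_\mu$ with a \emph{single uniform} parameter $\mu$ do \emph{not} form a cofinal system of neighbourhoods of $]X\times_TX[$ in $]\overline{X}\times_{\overline{T}}\overline{X}[$, and this is not what Berthelot's theorem says. The problem is that you have discarded the functions $f_1,\dots,f_r$ cutting out $\overline{X}$ inside $\mathfrak{X}_k$: the tube $]\overline{X}[_{\mathfrak{X}}$ is only an increasing union of the quasi-compact closed tubes $\{\,x : |f_i(x)|\le|\varpi|^{1/n}\ \forall i\,\}$, and on a general strict neighbourhood the admissible overconvergence radius degenerates as $n\to\infty$. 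Concretely, take $\mathfrak{X}=\spf{\cur{V}\langle s,t\rangle}$, $\overline{X}=V(s)$, $X=\overline{X}\cap D(t)$, and $\mathfrak{T}=\spf{\cur{V}}$ with $X=\overline{X}$ replaced by a single factor for simplicity: the open set
\[ W=\bigcup_n\left\{\,x : |s(x)|\le|\varpi|^{1/n},\ |t(x)|\ge|\varpi|^{1/m_n}\,\right\} \]
with $m_n\to\infty$ strictly is a neighbourhood of $]X[$ in $]\overline{X}[=\{|s|<1\}$, yet it contains no set of the form $\{\,x\in\,]\overline{X}[\,:|t(x)|\ge\lambda\,\}$ for any fixed $\lambda<1$, since for $\lambda=|\varpi|^{1/m}$ one can choose points with $|t|=\lambda$ and $|s|$ arbitrarily close to $1$. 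So the statement you yourself flag as ``the main obstacle'' is false as formulated; it only holds when $]\overline{X}[$ is quasi-compact (e.g.\ $\overline{X}=\mathfrak{X}_k$), and the issue has nothing to do with the rigid-versus-adic comparison, which is indeed harmless here.

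The repair is exactly what the paper does: invoke Berthelot's description \cite[\S1.2.4]{Ber96b} of a cofinal family of strict neighbourhoods, in which the overconvergence parameter is allowed to depend on the level of the exhaustion, i.e.\ $W$ may be taken of the form
\[ W=\bigcup_{n} \left\{ \left. x \right\vert\ v_x(\varpi^{-1}f^n_i \otimes f_{i'}^n)\leq 1\ \forall i,i',\ \exists j,j'\text{ s.t. }v_x(\varpi^{-1}g^{m_n}_{j}\otimes g_{j'}^{m_n}) \geq 1 \right\} \]
for an increasing sequence $m_n$. One then defines $V$ by the analogous formula on a single factor with exponent $2m_n$ in place of $m_n$, and your doubling argument goes through level by level: given a point of $V\times_{\mathfrak{T}_K}V$ whose two projections lie in the pieces indexed by $n_1$ and $n_2$, one checks it lands in the piece of $W$ indexed by $n=\min(n_1,n_2)$ (the $f$-conditions only improve under multiplication, and $\tfrac{1}{2m_{n_1}}+\tfrac{1}{2m_{n_2}}\le\tfrac{1}{\min(m_{n_1},m_{n_2})}$). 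So the idea you identified is the right one, but the bookkeeping with the two families of functions, which you omitted, is precisely where the content of the lemma lies.
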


\begin{proof} This is similar to \cite[Proposition 3.2.12]{LS07}. The question is local on both $\mathfrak{T}$ and $\mathfrak{X}$, hence we may assume that they are both affine. In particular, we may choose functions $f_1,\ldots,f_r,g_1,\ldots,g_s\in \Gamma(\mathfrak{X},\cur{O}_\mathfrak{X})$ such that $\overline{X}=\bigcap_i V(f_i)\subset \mathfrak{X}_k$ and $X=\bigcup_j D(g_j)\subset \overline{X}$. Hence we have
\[ \overline{X}\times_{\overline{T}}\overline{X}= \bigcap_{i,i'} V(f_i\otimes f_{i'}) \subset \mathfrak{X}_k\times_{\mathfrak{T}_k} \mathfrak{X}_k\]
and
\[ X\times_T X = \bigcup_{j,j'} D(g_j\otimes g_{j'}) \subset  \overline{X}\times_{\overline{T}}\overline{X}.  \]
By \cite[\S1.2.4]{Ber96b} we may assume that there exists an increasing sequence $m_n$ of integers such that
\[ W=\bigcup_{n} \left\{ \left. x\in \mathfrak{X}_K\times_{\mathfrak{T}_K}  \mathfrak{X}_K \right\vert v_x(\varpi^{-1}f^n_i \otimes f_{i'}^n)\leq 1\;\forall i,i',\;\;\exists j,j'\text{ s.t. }v_x(\varpi^{-1}g^{m_n}_{j}\otimes g_{j'}^{m_n}) \geq 1  \right\}.  \]
Again applying \cite[\S1.2.4]{Ber96b} we may construct the neighbourhood
\[ V=\bigcup_{n} \left\{ \left. x\in \mathfrak{X}_K \right\vert v_x(\varpi^{-1}f^n_i)\leq 1\;\forall i,i',\;\;\exists j\text{ s.t. }v_x(\varpi^{-1}g^{2m_n}_{j}) \geq 1  \right\}  \]
of $]X[_{\mathfrak{X}}$ inside $]\overline{X}[_\mathfrak{X}$ which clearly satisfies $V\times_{\mathfrak{T}_K}V \subset W$.
\end{proof}

We can now prove Theorem \ref{theo: jdes}.

\begin{proof}[Proof of Theorem \ref{theo: jdes}]
We may assume that $X$ and $T$ are dense in $\overline{X}$ and $\overline{T}$ respectively, from which we deduce that $\overline{X}\rightarrow \overline{T}$ is also proper and surjective. In particular, we can see that the square
\[ \xymatrix{ X \ar[r]\ar[d] & \overline{X}\ar[d] \\ T\ar[r] &\overline{T}  } \]
is Cartesian. Thus we have $f(]\overline{X}\setminus X[_{\mathfrak{X}})=]\overline{T}\setminus T[_\mathfrak{T}$ and hence using Lemma \ref{cor: open} we can deduce that if
\[ V\subset ]\overline{X}[_\mathfrak{X} \]
is a neighbourhood of $]X[_\mathfrak{X}$, then 
\[ f(V)\subset  ]\overline{T}[_\mathfrak{T}\]
must be a neighbourhood of $]T[_\mathfrak{T}$. Moreover, if $\{V\}$ forms a cofinal system of neighbourhoods of $]X[_\mathfrak{X}$ in $]\overline{X}[_\mathfrak{X}$, then $\{f(V)\}$ forms a  cofinal system of neighbourhoods of $]T[_\mathfrak{T}$ in $]\overline{T}[_\mathfrak{T}$.

In particular, for any such $V$ we may consider the category
\[ \mathrm{Coh}(V\times_{f(V)}V \rightrightarrows V) \]
of coherent $\cur{O}_V$-modules together with descent data relative to $V\rightarrow f(V)$. Since $V\times_{f(V)} V$ is a neighbourhood of $]X\times_T X[_{\mathfrak{X}\times_\mathfrak{T}\mathfrak{X}}$ in $]\overline{X}\times_{\overline{T}}\overline{X}[_{\mathfrak{X}\times_\mathfrak{T}\mathfrak{X}}$ we therefore obtain a pull-back functor
\[ \mathrm{Coh}(V\times_{f(V)}V \rightrightarrows V) \rightarrow  \mathrm{Coh}(j_X^\dagger\cur{O}_{]\overline{X}[_\mathfrak{X}} \rightrightarrows j_{X\times_T X} \cur{O}_{]\overline{X}\times_{\overline{T}} \overline{X}[_{\mathfrak{X}\times_\mathfrak{T}\mathfrak{X}}} )  \]
and hence a functor
\[ 2\text{-}\mathrm{colim}_V \mathrm{Coh}(V\times_{f(V)}V \rightrightarrows V) \rightarrow  \mathrm{Coh}(j_X^\dagger\cur{O}_{]\overline{X}[_\mathfrak{X}} \rightrightarrows j_{X\times_T X} \cur{O}_{]\overline{X}\times_{\overline{T}} \overline{X}[_{\mathfrak{X}\times_\mathfrak{T}\mathfrak{X}}} ). \]
It follows from Lemma \ref{lemma: product} together with \cite[Proposition 6.1.15]{LS07} that this functor is an equivalence of categories. By Theorem \ref{theo: cohdes} above we have an equivalence of categories
\[\mathrm{Coh}(V\times_{f(V)}V \rightrightarrows V ) \cong \mathrm{Coh}(f(V)),\]
and hence once more applying \cite[Proposition 6.1.15]{LS07} finishes the proof.
\end{proof}

\section{Effective descent for isocrystals}

We can now give the proof of the first main descent result.

\begin{theorem} \label{theo: main} Let $f:X\rightarrow Z$ be a proper surjective map of $k$-varieties. Then $f$ is a morphism of effective descent for overconvergent isocrystals.
\end{theorem}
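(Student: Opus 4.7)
The strategy is to combine Theorem \ref{theo: jdes} with Le Stum's site-theoretic description of $\isoc{Z/K}$ recalled in Section \ref{sec: frj}. Given an object $(\cur{E}, \alpha)$ of the category $\isoc{X/K}_{X\times_Z X\rightrightarrows X}$ of overconvergent isocrystals on $X$ with descent data relative to $f$, the task is to produce, functorially in every frame $(T, \overline{T}, \mathfrak{T})$ equipped with a structure morphism $T\to Z$, a coherent $j_T^\dagger \cur{O}_{]\overline{T}[_\mathfrak{T}}$-module on $]\overline{T}[_\mathfrak{T}$. Full faithfulness of $f^*$ being already known via \cite{Tsu03b, ZB14b}, the whole construction is local on $\mathfrak{T}$, so I may assume throughout that $\mathfrak{T}$ is affine.

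The key step is, for each such affine frame over $Z$, the construction of a morphism of frames
\[ g:(X_T', \overline{X_T'}, \mathfrak{X_T'}) \to (T, \overline{T}, \mathfrak{T}) \]
with $X_T' \to T$ proper surjective, $\overline{X_T'} \to \overline{T}$ proper, and $\mathfrak{X_T'} \to \mathfrak{T}$ flat, together with a morphism $X_T' \to X$ over $Z$. To construct it, I would start from $X_T := X \times_Z T$, apply Chow's lemma to reduce to the case in which $X_T \to T$ is projective, choose a projective compactification $X_T \hookrightarrow \overline{X_T}$ via a closed embedding into some $\P^N_{\overline{T}}$, take the scheme-theoretic closure of $\overline{X_T}$ in $\P^N_{\mathfrak{T}}$ as a first candidate for $\mathfrak{X_T}$, and finally apply Raynaud--Gruson style flattening (as in, e.g., \cite[Theorem 5.2]{Bos09}) to blow up in the special fibre so as to achieve flatness of $\mathfrak{X_T'}\to\mathfrak{T}$ without changing the rigid generic fibre.

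Given such a $g$, evaluating the site-theoretic functor $\cur{E}$ at the frame $(X_T', \overline{X_T'}, \mathfrak{X_T'})$ (viewed as a frame over $X$ via $X_T'\to X$) yields a coherent $j_{X_T'}^\dagger$-module $\cur{E}_{X_T'}$ on $]\overline{X_T'}[_{\mathfrak{X_T'}}$. Using the natural projection $X_T'\times_T X_T' \to X\times_Z X$, the datum $\alpha$ pulls back to descent data on $\cur{E}_{X_T'}$ relative to $g$, and Theorem \ref{theo: jdes} applied to $g$ then supplies the desired coherent $j_T^\dagger \cur{O}_{]\overline{T}[_\mathfrak{T}}$-module at the frame $(T, \overline{T}, \mathfrak{T})$.

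The main obstacle I expect is verifying functoriality in $(T, \overline{T}, \mathfrak{T})$: different auxiliary choices of $(X_T', \overline{X_T'}, \mathfrak{X_T'})$ must give canonically isomorphic outputs, and the resulting modules at different base frames must be compatible under pullback along morphisms of frames over $Z$. Both compatibilities reduce, via the already-established full faithfulness of descent applied to common refinements and composites of frames, to checking canonical isomorphisms on the modules upstairs, where they are tautological. Arranging the flatness of $\mathfrak{X_T'}\to \mathfrak{T}$ via Raynaud--Gruson flattening is a secondary technical point that introduces no new conceptual content.
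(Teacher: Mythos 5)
Your overall strategy---reduce to Theorem \ref{theo: jdes} by producing, for each frame $(T,\overline{T},\mathfrak{T})$ over $Z$, an auxiliary frame over it satisfying the hypotheses of that theorem, then descend the realisations using Le Stum's site-theoretic description---is exactly the paper's. But your construction of the auxiliary frame contains a gap, and it is precisely the point where the paper is cleverer. Theorem \ref{theo: jdes} only asks that $\mathfrak{X}\rightarrow\mathfrak{T}$ be \emph{flat}; it does not ask that $\mathfrak{X}$ be proper over $\mathfrak{T}$, nor that its special fibre be $\overline{X}$. So after applying Chow's lemma \emph{once} to $X\rightarrow Z$ and fixing a closed immersion $X\hookrightarrow\P^n_Z$ (locally on $Z$), the paper simply takes $\mathfrak{X}=\widehat{\P}^n_{\mathfrak{T}}$, with $\overline{X}_T$ the closure of $X_T$ in $\P^n_{\overline{T}}$: flatness is free because projective space is smooth over $\mathfrak{T}$. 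Your alternative---``scheme-theoretic closure of $\overline{X_T}$ in $\P^N_{\mathfrak{T}}$'' followed by Raynaud--Gruson flattening---does not go through as stated: $\overline{X_T}$ lives in the special fibre, whose underlying space is all of $\widehat{\P}^N_{\mathfrak{T}}$, so the ``closure'' is not a meaningful intermediate object; and an admissible blow-up used to flatten changes the special fibre, so you would still need to check that $\overline{X_T}$ (or its strict transform) embeds as a closed subscheme of the new formal scheme and that the tube conditions of Theorem \ref{theo: jdes} survive. None of this is needed.

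The second problem is functoriality. Because the paper makes its one global choice ($X\hookrightarrow\P^n_Z$) before quantifying over frames, the assignment $(T,\overline{T},\mathfrak{T})\mapsto(X_T,\overline{X}_T,\widehat{\P}^n_{\mathfrak{T}})$ is strictly compatible with base change, so the descended modules $F_T$ come with canonical comparison isomorphisms satisfying the cocycle condition with essentially no extra work. Your construction makes fresh choices per frame (Chow's lemma applied to $X_T\rightarrow T$, a choice of compactification, a choice of flattening blow-up), so the independence-of-choices and compatibility-under-pullback arguments you defer to ``full faithfulness applied to common refinements'' become the real content of the proof; note also that the relevant full faithfulness here is that of descent for coherent $j^\dagger$-modules (Theorem \ref{theo: jdes}), applied to comparison frames dominating both choices, not the full faithfulness of $f^*$ on isocrystals that you quote from \cite{Tsu03b,ZB14b}. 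You should restructure so that Chow's lemma and the projective embedding are fixed once on $X\rightarrow Z$ (locally on $Z$), after which the flattening issue disappears and functoriality is automatic.
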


Throughout the proof, we will use Le Stum's `site-theoretic' characterisation of overconvergent isocrystals \cite[\S8]{LS07} recalled above.

\begin{proof} Let us first treat the case of a proper surjective map $f:X\rightarrow Z$. As usual, we may by Chow's lemma assume that $f$ is projective, and since the question is also local on $Z$, we may assume that we have some closed immersion $X\hookrightarrow \P^n_Z$. The basic construction we will use is the following.

Let $(T,\overline{T},\mathfrak{T})$ be a frame equipped with a map to $Z$. Then we may base change $X\hookrightarrow \P^n_Z$ by $T\rightarrow Z$ to obtain
\[ X_T \hookrightarrow \P^n_T \]
and hence we may extend $X_T\rightarrow T$ to a morphism of frames
\[ \xymatrix{ X_T \ar[r]\ar[d] & \overline{X}_T \ar[r]\ar[d] & \widehat{\P}^n_\mathfrak{T} \ar[d] \\ T\ar[r] & \overline{T} \ar[r]& \mathfrak{T} }\]
where $\overline{X}_T$ is simply the closure of $X_T$ inside $\P^n_{\overline{T}}$. Since proper surjective maps are stable by base change, this morphism satisfies the conditions of Theorem \ref{theo: jdes}.

Let us first consider the `fully faithful' part of descent, the functor
\[ f^*: \isoc{Z/K}\rightarrow \isoc{X\times_Z X \rightrightarrows X/K } \]
is clearly faithful, since $f^*:\isoc{Z/K}\rightarrow \isoc{X/K}$ is, it therefore suffices to show fullness. So suppose that we have objects $E_1,E_2\in \isoc{Z/K}$ and a morphism $\psi:f^*E_1\rightarrow f^*E_2$ compatible with the canonical descent data. Then for any frame $(T,\overline{T},\mathfrak{T})$ equipped with a map $T\rightarrow Z$ we may form $f_T:(X_T,\overline{X}_T,\widehat{\P}^n_\mathfrak{T})\rightarrow (T,\overline{T},\mathfrak{T})$ as above, and realise $E_i$ on $(T,\overline{T},\mathfrak{T})$ to obtain coherent $j^\dagger\cur{O}_{]\overline{T}[_\mathfrak{T}}$-modules $E_{i,T}$. Moreover, we may realise $\psi$ on $(X_T,\overline{X}_T,\widehat{\P}^n_\mathfrak{T})$ to obtain a morphism
\[ \psi_T: f_T^\dagger E_{1,T} \rightarrow f_T^\dagger E_{2,T},\]
which has to be compatible with the natural descent data on $E_{i,T}$ relative to $f_T:(X_T,\overline{X}_T,\widehat{\P}^n_\mathfrak{T})\rightarrow (T,\overline{T},\mathfrak{T})$. Hence by Theorem \ref{theo: jdes} it has to come from a unique morphism $E_{1,T} \rightarrow E_{2,T}$ of coherent $j^\dagger\cur{O}_{]\overline{T}[_\mathfrak{T}}$-modules. By uniqueness these are then compatible as $(T,\overline{T},\mathfrak{T})$ varies, and hence give rise to a morphism $E_1\rightarrow E_2$ of overconvergent isocrystals on $Z/K$.

Next, let us treat the effectivity part of descent. So let $E\in\isoc{X/K}$ be equipped with descent data relative to $f$; we wish to produce an overconvergent isocrystal on $Z/K$, and we will do so by constructing its realisations on any frame $(T,\overline{T},\mathfrak{T})$ equipped with a map $T\rightarrow Z$. In this situation, we may form $(X_T,\overline{X}_T,\widehat{\P}^n_\mathfrak{T})\rightarrow (T,\overline{T},\mathfrak{T})$ as above, and realise $E$ on $(X_T,\overline{X}_T,\widehat{\P}^n_\mathfrak{T})$ to obtain a coherent $j_{X_T}^\dagger\cur{O}_{]\overline{X}_T[_{\widehat{\P}^n_\mathfrak{T}}}$-module $E_{X_T}$. Moreover, the descent data for $E$ relative to $X\rightarrow Z$ gives rise to descent data for $E_{X_T}$ relative to
\[ (X_T,\overline{X}_T,\widehat{\P}^n_\mathfrak{T})\rightarrow (T,\overline{T},\mathfrak{T}).\]
Hence by Theorem \ref{theo: jdes} we obtain a coherent $j_T^\dagger\cur{O}_{]\overline{T}[_\mathfrak{T}}$-module $F_T$ whose pullback to $(X_T,\overline{X}_T,\widehat{\P}^n_\mathfrak{T})$ is $E_{X_T}$. Note that once our original embedding $X\rightarrow \P^n_Z$ was fixed the construction of $F_T$ is completely canonical, and does not depend on any further choices. Thus one easily checks using the corresponding properties of $E$ together with Theorem \ref{theo: jdes} that if $g: (T',\overline{T}',\mathfrak{T}')\rightarrow  (T,\overline{T},\mathfrak{T})$ is a morphism of frames over $Z$, then there is a corresponding isomorphism $g^\dagger F_{T}\isomto F_{T'}$, and these moreover satisfy the cocycle condition. Hence there is a unique overconvergent isocrystal $F$ on $Z/K$ whose realisation on each $(T,\overline{T},\mathfrak{T})$ is exactly $F_T$. This completes the proof in the proper surjective case.
 \end{proof}
 
 Now using \cite[Tag 05WN]{stacks} we may deduce the faithfully flat case.
 
 \begin{theorem} \label{theo: main2} Let $f:X\rightarrow Z$ be a faithfully flat map of $k$-varieties. Then $f$ is a morphism of effective descent for overconvergent isocrystals.
\end{theorem}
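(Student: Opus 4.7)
The plan is to deduce Theorem \ref{theo: main2} from the proper surjective case already established in Theorem \ref{theo: main}, using the general descent-theoretic lemma \cite[Tag 05WN]{stacks}. Roughly, that lemma allows us to verify effective descent for $f: X \to Z$ by producing a proper surjective ``refinement'' $g: Z' \to Z$ factoring through $f$, i.e., with $g = f \circ s$ for some $Z$-morphism $s: Z' \to X$; once such a $g$ is found, the lemma supplies the remaining formal bookkeeping.

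First, I would construct such a refinement $g: Z' \to Z$ for the given faithfully flat $f: X \to Z$ of $k$-varieties. For finite-type maps of Noetherian varieties this is a standard consequence of fppf-descent techniques: faithfully flat morphisms admit sections fppf-locally on the target, and one can then apply Nagata compactification (together with Chow's lemma if necessary) to propagate these local sections into a proper surjective $Z' \to Z$ equipped with a global $Z$-morphism into $X$. In many concrete situations one can even arrange $g$ to be finite surjective.

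Given such $g: Z' \to Z$, Theorem \ref{theo: main} immediately gives that $g$ is a morphism of effective descent for overconvergent isocrystals. The Stacks lemma \cite[Tag 05WN]{stacks} then formally yields the same for $f$: descent data for $\isoc{X/K}$ relative to $f$ pull back via $s$ to descent data relative to $g$, which descends by Theorem \ref{theo: main} to an isocrystal $F$ on $Z$, and the isomorphism $f^*F \cong E$ compatibly with the given descent datum then follows from the cocycle condition together with the uniqueness of effective descent along $g$. The fully faithful part of descent for $f$ follows by the same mechanism applied to morphisms. The main obstacle in this reduction is the construction of the proper surjective refinement $g$; once it is in hand, the rest of the argument is purely formal descent-theoretic manipulation.
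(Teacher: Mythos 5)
Your overall strategy --- reducing to the proper surjective case of Theorem \ref{theo: main} by refining $f$ to a morphism $g=f\circ s:Z'\rightarrow Z$ for which effective descent is already known, and then running the formal refinement argument --- is exactly the route the paper takes. However, you have inverted the roles of the two ingredients, and the step you yourself identify as the main obstacle is the one you handle incorrectly. The lemma \cite[Tag 05WN]{stacks} is not the ``formal bookkeeping'' statement that a refinement of effective descent implies effective descent; it is precisely the geometric input you are missing: for $f:X\rightarrow Z$ flat, of finite presentation and surjective, there exists a composite $Z'\rightarrow Z$ of Zariski covers and finite locally free (hence finite surjective, hence proper surjective) maps such that $X\times_Z Z'\rightarrow Z'$ admits a section. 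Combined with Zariski descent and Theorem \ref{theo: main}, this is the paper's entire proof.

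Your proposed substitute for that input does not work. The observation that ``faithfully flat morphisms admit sections fppf-locally on the target'' is circular here: the tautological fppf-local section lives over $X$ itself (the diagonal section of $X\times_Z X\rightarrow X$), so exploiting it already presupposes descent along an fppf cover, which is what is being proved. And Nagata compactification cannot ``propagate'' a local section $W\rightarrow X$ to a proper surjective $\overline{W}\rightarrow Z$: the $Z$-morphism $W\rightarrow X$ has no reason to extend to a compactification $\overline{W}$ of $W$ over $Z$ when $X$ is not proper over $Z$ (and if $X\rightarrow Z$ were proper and flat surjective, Theorem \ref{theo: main} would apply directly with no reduction needed). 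The whole content of \cite[Tag 05WN]{stacks} is that the localization can be chosen of a special form --- Zariski covers composed with finite locally free maps --- for which effective descent is already available; this rests on the existence of quasi-finite flat quasi-sections and Zariski's main theorem, not on compactification. With the correct statement of that tag in hand, the rest of your argument (pulling back descent data along $s$, descending along $g$, and recovering $f^*F\cong E$ from uniqueness) is the same ``usual arguments'' step the paper invokes and is fine.
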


\begin{proof}  By \cite[Tag 05WN]{stacks} we know that if $f:X\rightarrow Z$ is faithfully flat, then there exists a composite $Z'\rightarrow Z$ of Zariski covers and finite faithfully flat maps such that $X':= X\times_Z Z'$ admits a section. By the usual arguments, together with the fact that we know effective descent for Zariski covers, we may therefore reduce the faithfully flat case to the \emph{finite} faithfully flat case, and hence to the proper surjective case already handled.
\end{proof}

\section{Topological invariance and equivalence of Frobenius pull-back}

The main application of Theorem \ref{theo: main} we have in mind is the following.

\begin{theorem} \label{theo: topinv}
Let $f:X\rightarrow Z$ be a universal homeomorphism (i.e. $f$ is finite, surjective and radicial). Then 
\[ f^*:\isoc{Z/K}\rightarrow \isoc{X/K} \]
is an equivalence of categories.
\end{theorem}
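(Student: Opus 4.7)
The plan is to apply Theorem \ref{theo: main} twice: once to $f$ itself, and once to the diagonal $\Delta: X \to X \times_Z X$. Since $f$ is finite and surjective, it is in particular proper and surjective, so Theorem \ref{theo: main} gives an equivalence between $\isoc{Z/K}$ and the category of objects of $\isoc{X/K}$ equipped with descent data relative to $f$. It therefore suffices to prove that the forgetful functor from this descent category to $\isoc{X/K}$ is an equivalence.

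The key observation is that the diagonal $\Delta: X \to X \times_Z X$ is itself a proper surjective map of $k$-varieties: it is a closed immersion because $f$ is separated (being finite), and it is surjective because $f$ is radicial. Since $\Delta$ is a monomorphism, one has $X \times_{X \times_Z X} X = X$, so descent data relative to $\Delta$ is trivial, and Theorem \ref{theo: main} applied to $\Delta$ shows that $\Delta^*: \isoc{X \times_Z X /K} \to \isoc{X/K}$ is an equivalence. The same argument applies to the triple iterated diagonal $\Delta^{(3)}: X \to X \times_Z X \times_Z X$, which is again a surjective closed immersion for the same reasons, yielding an equivalence $\Delta^{(3)*}$ on overconvergent isocrystals.

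Given these equivalences, both the construction and the uniqueness of descent data on any $E \in \isoc{X/K}$ become formal. Since $\pi_0 \circ \Delta = \pi_1 \circ \Delta = \mathrm{id}_X$, the objects $\pi_0^* E$ and $\pi_1^* E$ of $\isoc{X \times_Z X /K}$ have the same image $E$ under $\Delta^*$, so the equivalence $\Delta^*$ yields a unique isomorphism $\alpha_E : \pi_0^* E \isomto \pi_1^* E$ restricting to the identity along $\Delta$. The cocycle condition for $\alpha_E$ reduces to the identity $\mathrm{id}_E = \mathrm{id}_E \circ \mathrm{id}_E$ after pulling back by the iterated diagonal, hence holds by faithfulness of $\Delta^{(3)*}$; similarly, compatibility of any morphism $\phi: E_1 \to E_2$ in $\isoc{X/K}$ with the canonical descent data is automatic, since it holds after pull-back by $\Delta^*$ and $\Delta^*$ is faithful.

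The main conceptual point --- and the only real subtlety --- is the observation that $\Delta$ satisfies the hypotheses of Theorem \ref{theo: main}, which crucially combines the separatedness of $f$ (to get a closed immersion) with its radicialness (to get surjectivity); once this is in hand, everything reduces to formal manipulation of the canonical descent data produced by the equivalences $\Delta^*$ and $\Delta^{(3)*}$.
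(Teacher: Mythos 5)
Your proof is correct, but it reaches the key intermediate equivalence by a genuinely different route than the paper. The paper observes that the diagonal $\Delta:X\rightarrow X\times_Z X$ is a nilpotent immersion and then deduces $\isoc{X\times_Z X/K}\cong\isoc{X/K}$ \emph{directly from the definitions}, using the fact that the tube of a subscheme of a formal $\cur{V}$-scheme depends only on its underlying set; the canonical descent data then comes for free, and Theorem \ref{theo: main} is applied once, to $f$. You instead note that $\Delta$ is itself a surjective closed immersion (surjective precisely because $f$ is radicial, a closed immersion because $f$ is separated), and so is a legitimate input to Theorem \ref{theo: main}; since $\Delta$ is a monomorphism its descent category is trivial, so the theorem yields the equivalence $\Delta^*:\isoc{X\times_Z X/K}\isomto\isoc{X/K}$, and likewise for $\Delta^{(3)}$. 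This is a valid and rather elegant reuse of the main theorem, though it invokes the heavy descent machinery where the paper only needs the set-theoretic nature of tubes; what your version buys is that it never has to look inside the definition of an overconvergent isocrystal. One small point you should make explicit: you establish uniqueness of descent data only among those isomorphisms $\alpha$ with $\Delta^*\alpha=\mathrm{id}_E$, but to conclude that the forgetful functor from the descent category is full (and hence an equivalence) you need that \emph{every} descent datum satisfies $\Delta^*\alpha=\mathrm{id}_E$. This is the standard formal consequence of the cocycle condition obtained by pulling it back along $\mathrm{id}_X\times_Z\Delta:X\times_Z X\rightarrow X\times_Z X\times_Z X$ and cancelling the invertible $\alpha$; it is routine, but without it your fullness argument only covers objects equipped with the canonical datum.
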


\begin{proof}
We follow the proof of \cite[Corollary 4.10]{Ogu84}. Under the given assumptions on $f$, the diagonal
\[ X\rightarrow X\times_{Z} X \]
is a nilpotent immersion. Since the tube of a $k$-sub-scheme $T$ inside some formal $\cur{V}$-scheme $\mathfrak{T}$ only depends on the underlying set of $T$, we can thus deduce directly from the definitions that we have an equivalence of categories
\[ \isoc{X\times_{Z} X/K} \cong \isoc{X/K} \]
and hence an equivalence
\[ \isoc{X/K} \cong \isoc{X\times_{Z} X \rightrightarrows X/K}.\]
In other words, every $E\in \isoc{X/K}$ is equipped with a canonical descent data relative to $f:X\rightarrow Z$. Since $f$ is finite and surjecitve, we may therefore apply Theorem \ref{theo: main}.
\end{proof}

Now let us suppose that we have chosen a lift $\sigma$ to $K$ of the $q$-power Frobenius on $k$. Thus we obtain a $q$-power Frobenius pull-back functor
\[ F^*:\isoc{X/K}\rightarrow \isoc{X/K}. \]

\begin{corollary} \label{cor: Fequiv}
Assume that $k$ is perfect, and let $X$ be any $k$-variety. Then $F^*$ is an equivalence of categories.
\end{corollary}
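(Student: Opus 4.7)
The plan is to derive this directly from Theorem \ref{theo: topinv} by recognising the Frobenius pull-back $F^*$ as pull-back along the absolute $q$-power Frobenius morphism $F_X:X\rightarrow X$, and verifying that $F_X$ is a universal homeomorphism when $k$ is perfect.

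The first step is to check that $F_X$ satisfies the hypotheses of Theorem \ref{theo: topinv}. Surjectivity and radicial-ness (pure inseparability) of the absolute Frobenius are automatic. Finiteness is where the perfectness of $k$ enters: for an affine piece $X=\mathrm{Spec}(A)$ with generators $x_1,\ldots,x_n$ of $A$ as a $k$-algebra, the finite set $\{x_1^{e_1}\cdots x_n^{e_n}:0\leq e_i<q\}$ generates $A$ as a module over $A^q$, since $k=k^q$ allows us to absorb all scalar coefficients into the image of Frobenius. To apply Theorem \ref{theo: topinv} one must further view $F_X$ as a morphism of $k$-varieties, rather than an $F_k$-semi-linear map. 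This is again where perfectness intervenes: since $F_k$ is an automorphism, the Frobenius twist $X^{(q)}:=X\times_{k,F_k}k$ is canonically $k$-isomorphic to $X$, and under this identification $F_X$ becomes a genuine $k$-morphism. Thus $F_X$ is a finite surjective radicial morphism of $k$-varieties, i.e.\ a universal homeomorphism.

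The second step is to unpack the construction of $F^*$ and recognise that, on the realisation over any frame $(T,\overline{T},\mathfrak{T})$ over $X$, it coincides with pull-back along $F_X$ combined with the base-change $\mathfrak{T}\mapsto \mathfrak{T}^\sigma$ of the chosen lift along $\sigma$. The latter $\sigma$-twist is an equivalence on coherent $j^\dagger$-modules, since (at least in the absolutely unramified case) $\sigma$ can be taken to be an automorphism of $\cur{V}$ when $k$ is perfect, and the general ramified case reduces to this by base change in $K$. Theorem \ref{theo: topinv} applied to the universal homeomorphism $F_X$ then yields the required equivalence.

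The main obstacle, such as it is, is purely notational: carefully bookkeeping the interaction between the semi-linear lift $\sigma$ and the $k$-linear structure of the frames, so as to make rigorous the statement that $F^*$ agrees with pull-back along the $k$-morphism $F_X$ up to an invertible change of coefficients. Once this identification is in place, the conclusion is an immediate consequence of Theorem \ref{theo: topinv}.
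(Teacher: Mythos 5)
Your argument is essentially the paper's: factor the Frobenius pull-back through the relative Frobenius $F_{X/k}:X\rightarrow X^{(q)}$, which is a universal homeomorphism handled by Theorem \ref{theo: topinv}, composed with a semi-linear twist that is an equivalence because $k$ is perfect. The one imprecision is the claim that the absolute Frobenius ``becomes a genuine $k$-morphism'' under the identification $X^{(q)}\cong X$ --- it does not (the identification covers $F_k$, not $\mathrm{id}_k$); what you are actually using is the factorisation of $F_X$ into the $k$-linear relative Frobenius and the $\sigma$-semi-linear twist, which is exactly how the paper states it.
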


\begin{proof} Let $X^{(q)}$ be the pull-back of $X$ by the $q$-power Frobenius of $k$. Since $k$ is perfect, the corresponding (semi-linear) pull-back functor
\[ \isoc{X/K}\rightarrow \isoc{X^{(q)}/K} \]
is an equivalence of categories. It therefore suffices to observe that the relative Frobenius $F_{X/k}:X\rightarrow X^{(q)}$ is a universal homeomorphism and apply Theorem \ref{theo: topinv}.
\end{proof}

\bibliographystyle{bibsty}
\bibliography{lib.bib}

\end{document}